\providecommand{\abs}[1]{\left\lvert#1 \right\rvert}
\providecommand{\Log}[1]{\textnormal{Log} #1}
\newtheorem{lemma}{Lemma}
\newtheorem{theorem}{Theorem}
\newtheorem{corollary}{Corollary}
\newcounter{counter}
\newcommand{\counter}{\stepcounter{counter}\thecounter}
\newenvironment{proof}
{\begin{trivlist}\item[\hskip%
\labelsep{{\it \noindent Proof.}}]}{\hfill $\square$
\end{trivlist}}
\newenvironment{proofTheorem1}
{\begin{trivlist}\item[\hskip%
\labelsep{{\it \noindent Proof of Theorem 1.}}]}{\hfill $\square$
\end{trivlist}}
\newenvironment{proofTheorem2}
{\begin{trivlist}\item[\hskip%
\labelsep{{\it \noindent Proof of Theorem 2.}}]}{\hfill $\square$
\end{trivlist}}
\newenvironment{proofCorollary1}
{\begin{trivlist}\item[\hskip%
\labelsep{{\it \noindent Proof of Corollary 1.}}]}{\hfill $\square$
\end{trivlist}}
\newenvironment{remark}
{\begin{trivlist}\item[\hskip%
\labelsep{{\it \noindent Remark \counter}}]}{\hfill
\end{trivlist}}
\numberwithin{equation}{section}
\begin{document}
\begin{center}
{\huge \textbf{Strong laws of large numbers for \\ arrays of row-wise extended negatively \\ dependent random variables}} \\
\vspace{1.5cm}
{\Large Jo\~{a}o Lita da Silva\footnote{\textit{E-mail address:} \texttt{jfls@fct.unl.pt}; \texttt{joao.lita@gmail.com}}} \\
\vspace{0.1cm}
\textit{Department of Mathematics and GeoBioTec \\ Faculty of Sciences and Technology \\
NOVA University of Lisbon \\ Quinta da Torre, 2829-516 Caparica,
Portugal}
\end{center}

\vspace{1.5cm}

\begin{abstract}
The main purpose of this paper is to obtain strong laws of large numbers for arrays or weighted sums of random variables under a scenario of dependence. Namely, for triangular arrays $\{X_{n,k}, \, 1 \leqslant k \leqslant n, \, n \geqslant 1 \}$ of row-wise extended negatively dependent random variables weakly mean dominated by a random variable $X \in \mathscr{L}_{1}$ and sequences $\{b_{n} \}$ of positive constants, conditions are given to ensure $\sum_{k=1}^{n} \left(X_{n,k} - \mathbb{E} \, X_{n,k} \right)/b_{n} \overset{\textnormal{a.s.}}{\longrightarrow} 0$. Our statements also allow us to improve recent results about complete convergence.
\end{abstract}

\bigskip

{\textit{Key words and phrases:} row-wise extended negatively dependent arrays, Bennett inequality, widely orthant dependent random variables, strong laws of large numbers.}

\bigskip

{\small{\textit{2010 Mathematics Subject Classification:} 60F15}}

\bigskip

\section{Introduction}

In 1934, Harald Cram\'{e}r analyzed the almost sure convergence of the row sums of random arrays assuming the total independence of the random variables, thereby becoming a pioneer in the approach of this subject (see \cite{Cramer34}). Thenceforth, many authors have studied this challenging topic requiring always some independence on the arrays (see \cite{Baxter55},  \cite{Gut92}, \cite{Hu89}, or \cite{Teicher81} among others). A landmark paper in this context is \cite{Hu89}, where Hu, M\'{o}ricz and Taylor showed that for any triangular array $\{X_{n,k}, 1 \leqslant k \leqslant n, n \geqslant 1 \}$ of row-wise independent and zero-mean random variables uniformly bounded by a random variable $X$ satisfying $\mathbb{E} \abs{X}^{2p} < \infty$ for some $1 \leqslant p < 2$, $\sum_{k=1}^{n} X_{n,k}/n^{1/p}$ converges completely to zero (that is, for every $\varepsilon > 0$, $\sum_{n=1}^{\infty} \mathbb{P} \left\{\abs{\sum_{k=1}^{n} X_{n,k}/n^{1/p}} > \varepsilon \right\} < \infty$), and \emph{a fortiori}
\begin{equation*}
\frac{1}{n^{1/p}} \sum_{k=1}^{n} X_{n,k} \overset{\textnormal{a.s.}}{\longrightarrow} 0
\end{equation*}
by virtue of Borel-Cantelli lemma. Motivated by Hu, M\'{o}ricz and Taylor's result, Gut restated it for $0 < p < 2$ under a weaker distribution condition and at the expense of probability inequalities (see \cite{Gut92}, page $55$). Picking up this idea, we shall obtain general strong laws of large numbers for random triangular arrays having dependent structure, relaxing the independence assumption on the random variables. To achieve this goal, we shall employ a sharp exponential inequality of Bennett type to get the complete convergence towards zero of the referred random triangular arrays. Our approach will leads us not only to simpler and shorter proofs but also to improvements in some recent statements (e.g. Theorem 2.1 of \cite{Sung12}), which shows the tightness of our results.

We begin by retrieve a central definition along this paper announced by Gut in \cite{Gut92}. A random triangular array $\{X_{n,k}, 1 \leqslant k \leqslant n, n \geqslant 1 \}$ is said to be \emph{weakly mean dominated} by a random variable $X$ if, for some $C>0$,
\begin{equation*}
\frac{1}{n} \sum_{k=1}^{n} \mathbb{P} \left\{\abs{X_{n,k}} > t \right\} \leqslant C \, \mathbb{P} \left\{\abs{X} > t  \right\},
\end{equation*}
for all $t > 0$ and every $n \geqslant 1$. Let us point out that the above condition is weaker than the uniformly bounded condition assumed in \cite{Hu89} (see Example 2.1 of \cite{Gut92}). The following notion of dependence for triangular arrays of random variables was introduced in \cite{Lita16a} and will be essential throughout. A triangular array $\{X_{n,k}, \, 1 \leqslant k \leqslant n, \, n \geqslant 1 \}$ of random variables is said to be \emph{row-wise upper extended negatively dependent} (row-wise UEND) if for each $n \geqslant 1$, there exists a positive finite number $M_{n}$ such that
\begin{equation*}
\mathbb{P} \left(X_{n,1} > x_{1}, X_{n,2} > x_{2}, \ldots, X_{n,n} > x_{n} \right) \leqslant M_{n} \prod_{k=1}^{n} \mathbb{P} \left(X_{n,k} > x_{k} \right)
\end{equation*}
holds for all real numbers $x_{1}, \ldots, x_{n}$. A triangular array $\{X_{n,k}, \, 1 \leqslant k \leqslant n, \, n \geqslant 1 \}$ of random variables is said to be \emph{row-wise lower extended negatively dependent} (row-wise LEND) if for each $n \geqslant 1$, there exists a positive finite number $M_{n}$ such that
\begin{equation*}
\mathbb{P} \left(X_{n,1} \leqslant x_{1}, X_{n,2} \leqslant x_{2}, \ldots, X_{n,n} \leqslant x_{n} \right) \leqslant M_{n} \prod_{k=1}^{n} \mathbb{P} \left(X_{n,k} \leqslant x_{k} \right)
\end{equation*}
holds for all real numbers $x_{1}, \ldots, x_{n}$. A triangular array $\{X_{n,k}, \, 1 \leqslant k \leqslant n, \, n \geqslant 1 \}$ of random variables is said to be \emph{row-wise extended negatively dependent} (row-wise END) if it is both row-wise UEND and row-wise LEND. The sequence $\{M_{n}, \, n \geqslant 1 \}$ aforementioned is called a \emph{dominating sequence} of $\{X_{n,k}, \, 1 \leqslant k \leqslant n, \, n \geqslant 1 \}$ (see \cite{Lita16a}).

Lastly, we need to introduce also some relevant notations. Given a positive monotone sequence of constants $\{u_{n} \}$, a continuous monotone function $u( \, \cdot \,)$ on $[0,\infty[$ is called a \emph{monotone extension} of $\{u_{n} \}$ if $u(n) = u_{n}$ (see \cite{Chow97}, page 90); $u^{-1}$ should be interpreted as the generalized inverse of the extension $u$ when convenient. Associated to a probability space $(\Omega, \mathcal{F}, \mathbb{P})$, we shall consider the space $\mathscr{L}_{p}$ $(p > 0)$ of all measurable functions $X$ (necessarily random variables) for which $\mathbb{E} \abs{X}^{p} < \infty$. The letter $C$ will denote a positive constant, which is not necessarily the same one in each appearance; symbols $C(\varepsilon)$ or $C(\delta)$ have the same meaning with the additional information that they depend on $\varepsilon$ or $\delta$, respectively. The notation $\lfloor x \rfloor$ will be used to indicate the largest integer not greater than $x$ and $\Log \, x$ will denote $\log \max\{x,e \}$.

\section{Main results}

Our first major result in this sequel is a general strong law of large numbers for triangular arrays of random variables having dependent structure and allow us weaken or strengthen the assumptions on the random variables through integrability conditions.

\begin{theorem}\label{thr:1}
Let $\{X_{n,k}, \, 1 \leqslant k \leqslant n, \, n \geqslant 1 \}$ be a triangular array of row-wise END random variables with dominating sequence $\{M_{n}, \, n \geqslant 1 \}$ weakly mean dominated by a random variable $X \in \mathscr{L}_{1}$, $\{a_{n} \}$ a positive increasing sequence of constants with increasing extension $a(\, \cdot \,)$ and $\{b_{n} \}$, $\{s_{n} \}$ positive nondecreasing sequences of constants with nondecreasing extensions $b(\, \cdot \,)$, $s ( \, \cdot \, )$ respectively. If
\begin{itemize}[align=left]
\item[\textnormal{(a)}] ${\displaystyle \sum_{k=1}^{n} \left[\mathbb{E} \left( X_{n,k}^{2}I_{\left\{\lvert X_{n,k} \rvert \leqslant a_{n} \right\}} \right) + a_{n}^{2} \mathbb{P} \left\{\lvert X_{n,k} \rvert > a_{n} \right\} \right] \leqslant s_{n}}$,

\item[\textnormal{(b)}] ${\displaystyle \frac{s_{n}}{a_{n} b_{n}} = o(1)}$ as $n \rightarrow \infty$,

\item[\textnormal{(c)}] ${\displaystyle \frac{s_{n}}{a_{n}^{2} \, \Log \, n} = o(1)}$ as $n \rightarrow \infty$,

\item[\textnormal{(d)}] ${\displaystyle \liminf_{n \rightarrow \infty} \frac{b_{n} \, \Log \left(a_{n} b_{n}/s_{n} \right)}{a_{n} \, \Log \left(n^{\delta} \right)}  > 1}$ for all $\delta > 0$,

\item[\textnormal{(e)}] ${\displaystyle \int_{0}^{\infty} \frac{1}{\Log \, u} \int_{\lfloor u \rfloor}^{\infty} \Log \left[\frac{a(t) b(t)}{s(t)} \right] \mathbb{P} \left\{a^{-1} \left(\abs{X} \right) > t \right\} \mathrm{d}t \mathrm{d}u < \infty}$,

\item[\textnormal{(f)}] ${\displaystyle \int_{0}^{\infty} \mathbb{P} \left\{\abs{X} > t \right\} \int_{0}^{\lfloor a^{-1}(t) \rfloor} \frac{u}{b(u)} \mathrm{d}u \, \mathrm{d}t < \infty}$,

\item[\textnormal{(g)}] $M_{n} = O\left(n^{\alpha} \right)$ as $n \rightarrow \infty$ for some $\alpha > 0$,
\end{itemize}
then
\begin{equation*}
\sum_{n=1}^{\infty} \mathbb{P} \left\{\abs{\frac{1}{b_{n}} \sum_{k=1}^{n} \left(X_{n,k} - \mathbb{E} \, X_{n,k} \right)} > \varepsilon \right\} < \infty \quad \text{for all} \; \; \varepsilon > 0,
\end{equation*}
and ${\displaystyle \frac{1}{b_{n}} \sum_{k=1}^{n} \left(X_{n,k} - \mathbb{E} \, X_{n,k} \right) \overset{\textnormal{a.s.}}{\longrightarrow} 0}$.
\end{theorem}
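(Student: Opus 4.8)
The plan is to prove the complete--convergence series, since the almost sure convergence then drops out of the Borel--Cantelli lemma. The natural device is truncation at the level $a_n$ followed by a Bennett--type exponential bound. For $1\leqslant k\leqslant n$ I would set
$$Y_{n,k}=-a_nI_{\{X_{n,k}<-a_n\}}+X_{n,k}I_{\{\lvert X_{n,k}\rvert\leqslant a_n\}}+a_nI_{\{X_{n,k}>a_n\}},$$
so that $\lvert Y_{n,k}\rvert\leqslant a_n$ and $\mathbb{E}\,Y_{n,k}^{2}=\mathbb{E}(X_{n,k}^{2}I_{\{\lvert X_{n,k}\rvert\leqslant a_n\}})+a_n^{2}\mathbb{P}\{\lvert X_{n,k}\rvert>a_n\}$, whence hypothesis (a) is exactly $\sum_{k=1}^{n}\mathbb{E}\,Y_{n,k}^{2}\leqslant s_n$. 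As $x\mapsto\max\{-a_n,\min\{a_n,x\}\}$ is nondecreasing, the row $\{Y_{n,k}\}$ inherits the row-wise END property with the same dominating sequence $\{M_n\}$. I then split
$$\frac{1}{b_n}\sum_{k=1}^{n}(X_{n,k}-\mathbb{E}\,X_{n,k})=\frac{1}{b_n}\sum_{k=1}^{n}(X_{n,k}-Y_{n,k})+\frac{1}{b_n}\sum_{k=1}^{n}(Y_{n,k}-\mathbb{E}\,Y_{n,k})+\frac{1}{b_n}\sum_{k=1}^{n}(\mathbb{E}\,Y_{n,k}-\mathbb{E}\,X_{n,k}),$$
and handle the three pieces separately, it sufficing that each obeys the desired conclusion.

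For the centred bounded middle sum I would invoke the Bennett inequality for END arrays flagged in the abstract (available from \cite{Lita16a}): using $\lvert Y_{n,k}-\mathbb{E}\,Y_{n,k}\rvert\leqslant 2a_n$, $\sum_{k}\operatorname{Var}Y_{n,k}\leqslant s_n$ by (a), and the constant $M_n$, one obtains a two-sided estimate of the form
$$\mathbb{P}\left\{\Bigl\lvert\sum_{k=1}^{n}(Y_{n,k}-\mathbb{E}\,Y_{n,k})\Bigr\rvert>\tfrac{\varepsilon}{3}b_n\right\}\leqslant 2M_n\exp\!\left(-\frac{s_n}{4a_n^{2}}\,h\!\left(\frac{2\varepsilon a_nb_n}{3s_n}\right)\right),\qquad h(x)=(1+x)\log(1+x)-x.$$
Condition (b) forces the argument $\varepsilon a_nb_n/s_n\to\infty$ and, together with (c), places us in the regime where $h(x)\sim x\log x$ dominates, so the exponent is asymptotic to $\tfrac{\varepsilon}{6}\,\tfrac{b_n}{a_n}\Log\!\left(\tfrac{a_nb_n}{s_n}\right)$. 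These bounds are summable in $n$ by pairing (g), $M_n=O(n^{\alpha})$, with (d): given $\varepsilon>0$, choosing $\delta>6(\alpha+2)/\varepsilon$ in (d) makes the exponent eventually exceed $(\alpha+2)\log n$, so the $n$th term is $O(n^{-2})$. The point of imposing (d) for \emph{every} $\delta$ is precisely to absorb arbitrarily small $\varepsilon$.

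The two remaining sums are tail estimates driven by the weak mean domination. Since $\lvert X_{n,k}-Y_{n,k}\rvert=(\lvert X_{n,k}\rvert-a_n)^{+}$, Markov's inequality and $\tfrac1n\sum_{k}\mathbb{P}\{\lvert X_{n,k}\rvert>t\}\leqslant C\,\mathbb{P}\{\lvert X\rvert>t\}$ give
$$\sum_{n=1}^{\infty}\mathbb{P}\left\{\Bigl\lvert\tfrac{1}{b_n}\sum_{k=1}^{n}(X_{n,k}-Y_{n,k})\Bigr\rvert>\tfrac{\varepsilon}{3}\right\}\leqslant\frac{3C}{\varepsilon}\sum_{n=1}^{\infty}\frac{n}{b_n}\int_{a_n}^{\infty}\mathbb{P}\{\lvert X\rvert>t\}\,\mathrm{d}t,$$
and a Fubini rearrangement identifies this series with the integral in (f), once one passes to the extensions $a(\cdot),b(\cdot)$ and the generalized inverse $a^{-1}$; thus (f) closes the remainder term and \emph{a fortiori} drives the mean correction $\tfrac1{b_n}\sum_{k}\lvert\mathbb{E}\,Y_{n,k}-\mathbb{E}\,X_{n,k}\rvert\leqslant\tfrac{Cn}{b_n}\int_{a_n}^{\infty}\mathbb{P}\{\lvert X\rvert>t\}\,\mathrm{d}t$ to $0$. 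The companion hypothesis (e), whose integrand carries the Bennett weight $\Log \left[\frac{a(t)b(t)}{s(t)}\right]$ tested against the truncation tail $\mathbb{P}\{a^{-1}(\lvert X\rvert)>t\}$, is what is needed when the remainder is instead controlled through the exponential inequality rather than through the first moment, and secures convergence of the corresponding series. I expect the main obstacle to be the middle step: extracting the asymptotics of $h$ sharply enough to match the exponent against $(\alpha+1)\log n$ and carrying out the $\delta$-versus-$(\varepsilon,\alpha)$ bookkeeping in (d); the secondary difficulty is the careful Fubini/monotone-extension translation of the integral hypotheses (e)--(f) into the convergent series above.
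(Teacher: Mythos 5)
Your proposal is correct and follows essentially the same route as the paper: truncate at level $a_n$ with caps $\pm a_n$ (your $Y_{n,k}$ is exactly the paper's $X_{n,k}'$), apply a Bennett-type exponential bound for row-wise END arrays to the centred bounded part using (a)--(d) and (g) (this is the paper's Lemma~3, built on its Lemma~1 --- the inequality is proved in this paper, not imported from \cite{Lita16a}), and control the remainder by Markov plus a Fubini rearrangement against the monotone extensions. The one substantive divergence is in the remainder: you exploit $\lvert X_{n,k}-Y_{n,k}\rvert=(\lvert X_{n,k}\rvert-a_n)^{+}$, whose mean is $\int_{a_n}^{\infty}\mathbb{P}\{\lvert X_{n,k}\rvert>t\}\,\mathrm{d}t$, so hypothesis (f) alone closes both tail pieces. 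The paper instead bounds $\lvert X_{n,k}''\rvert\leqslant\lvert X_{n,k}\rvert I_{\{\lvert X_{n,k}\rvert>a_n\}}$, and integration by parts then produces the additional term $\sum_n (n a_n/b_n)\,\mathbb{P}\{\lvert X\rvert>a_n\}$, which is exactly what conditions (d) and (e) are used to summate. Consequently your closing description of the role of (e) (``needed when the remainder is controlled through the exponential inequality'') is inaccurate; but since your sharper pointwise bound simply renders (e) unused, this does not damage the argument --- if anything it shows the hypothesis can be weakened under your bookkeeping. The Bennett asymptotics and the $\delta$-versus-$(\varepsilon,\alpha)$ matching against $M_n=O(n^{\alpha})$ are handled correctly.
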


\begin{remark}
Alternatively, condition (a) of Theorem~\ref{thr:1} can be written as
\begin{equation*}
\sum_{k=1}^{n} \int_{0}^{a_{n}} u \mathbb{P} \left\{\lvert X_{n,k} \rvert > u \right\} \mathrm{d}u \leqslant \frac{s_{n}}{2}.
\end{equation*}
\end{remark}

The next corollary is a strong law of large numbers for (weighted) arrays of row-wise END random variables that preserves both norming constants and moment condition assumed in \cite{Hu89}. Particularly, it broadens Theorem 2.1 of \cite{Gut92} to the herein referred dependent structures of random variables (by taking $c_{n,k} = 1$ for each $n,k$).

\begin{corollary}\label{cor:1}
Let $\{X_{n,k}, \, 1 \leqslant k \leqslant n, \, n \geqslant 1 \}$ be a triangular array of row-wise END random variables with dominating sequence $\{M_{n}, \, n \geqslant 1 \}$ weakly mean dominated by a (non null) random variable $X$ such that $\mathbb{E} \abs{X}^{2p} < \infty$ for some $0 <  p < 2$, and $M_{n} = O\left(n^{\alpha} \right)$ as $n \rightarrow \infty$ for some $\alpha > 0$. If $\left\{c_{n,k}, \, 1 \leqslant k \leqslant n, n \geqslant 1 \right\}$ is an array of constants such that
\begin{equation*}
\max_{1 \leqslant k \leqslant n} \abs{c_{n,k}} = O(1), \quad n \rightarrow \infty
\end{equation*}
then
\begin{equation*}
\sum_{n=1}^{\infty} \mathbb{P} \left\{\abs{\frac{1}{n^{1/p}} \sum_{k=1}^{n} c_{n,k} \left(X_{n,k} - \mathbb{E} \, X_{n,k} \right)} > \varepsilon \right\} < \infty \quad \text{for all} \; \; \varepsilon > 0,
\end{equation*}
and ${\displaystyle \frac{1}{n^{1/p}} \sum_{k=1}^{n} c_{n,k} \left(X_{n,k} - \mathbb{E} \, X_{n,k} \right) \overset{\textnormal{a.s.}}{\longrightarrow} 0}$.
\end{corollary}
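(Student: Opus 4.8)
The plan is to derive Corollary~\ref{cor:1} from Theorem~\ref{thr:1} by specializing the norming and truncation sequences and by first reducing the weighted array to an unweighted one. Multiplication of each row by the constants $\{c_{n,k}\}$ preserves the row-wise END property (a standard stability property; the signs of the $c_{n,k}$ are absorbed using the symmetry of END under $X \mapsto -X$, which interchanges the UEND and LEND inequalities), and it leaves the dominating sequence unchanged, so hypothesis (g), $M_{n} = O(n^{\alpha})$, is inherited. Writing $C_{0} := \sup_{n} \max_{1 \leqslant k \leqslant n} |c_{n,k}| < \infty$, which is finite by the hypothesis $\max_{k}|c_{n,k}| = O(1)$, the bound $\mathbb{P}\{|c_{n,k}X_{n,k}| > t\} \leqslant \mathbb{P}\{|X_{n,k}| > t/C_{0}\}$ together with weak mean domination yields
$$ \frac{1}{n}\sum_{k=1}^{n}\mathbb{P}\{|c_{n,k}X_{n,k}|>t\}\;\leqslant\; C\,\mathbb{P}\{|C_{0}X|>t\}, $$
so $\{c_{n,k}X_{n,k}\}$ is weakly mean dominated by $C_{0}X$, which still satisfies $\mathbb{E}|C_{0}X|^{2p} = C_{0}^{2p}\,\mathbb{E}|X|^{2p} < \infty$. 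It therefore suffices to verify the hypotheses of Theorem~\ref{thr:1} for this new array.

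Next I would take $b_{n} = a_{n} = n^{1/p}$, so that $a(t)=b(t)=t^{1/p}$ and $a^{-1}(t)=t^{p}$, and define $s_{n}$ through hypothesis (a). By the Remark and weak mean domination, $s_{n}$ may be taken as a constant multiple of $n\int_{0}^{n^{1/p}} u\,\mathbb{P}\{|X|>u\}\,\mathrm{d}u$. Here the computation bifurcates at $p=1$: when $p \geqslant 1$ one has $\mathbb{E}X^{2}<\infty$, giving $s_{n} = O(n)$, whereas when $p<1$ the bound $u\,\mathbb{P}\{|X|>u\} \leqslant u^{1-2p}\,\mathbb{E}|X|^{2p}$ gives $s_{n} = O(n^{2/p-1})$. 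In either regime $\Log\left(a_{n}b_{n}/s_{n}\right) \asymp \log n$, and the purely ``rate'' hypotheses (b), (c) follow by elementary power counting from $0<p<2$. I expect (d) to be the touchy one: read literally as holding ``for all $\delta>0$'' it forces the Bennett exponent to exceed every fixed multiple of $\log n$, which the choice $a_{n}=b_{n}$ does not provide; I would then damp the truncation by a slowly varying factor, $a_{n} = n^{1/p}(\Log n)^{-\theta}$ with $\theta>0$, making $b_{n}/a_{n}\to\infty$ while leaving the size of $s_{n}$ and all the remaining estimates essentially intact.

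The technical heart is the pair of integral conditions (e) and (f). For (e) I would apply Fubini to interchange the $u$- and $t$-integrals: since $\lfloor u\rfloor \leqslant t$ amounts to $u < \lfloor t\rfloor+1$, the inner integral becomes $\int_{0}^{\lfloor t\rfloor+1}\mathrm{d}u/\Log u$, which is asymptotically $t/\log t$; because $\Log\left[a(t)b(t)/s(t)\right]\asymp \log t$ this cancels the factor $1/\log t$, reducing (e) to the finiteness of $\int_{0}^{\infty} t\,\mathbb{P}\{a^{-1}(|X|)>t\}\,\mathrm{d}t = \int_{0}^{\infty} t\,\mathbb{P}\{|X|>t^{1/p}\}\,\mathrm{d}t$. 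The substitution $v=t^{1/p}$ turns this into $p\int_{0}^{\infty} v^{2p-1}\,\mathbb{P}\{|X|>v\}\,\mathrm{d}v = \tfrac{1}{2}\,\mathbb{E}|X|^{2p}<\infty$. For (f), with $b(u)=u^{1/p}$ the inner integral $\int_{0}^{\lfloor t^{p}\rfloor} u^{1-1/p}\,\mathrm{d}u$ is of order $t^{2p-1}$, so (f) reduces to $\int_{0}^{\infty} t^{2p-1}\,\mathbb{P}\{|X|>t\}\,\mathrm{d}t = \tfrac{1}{2p}\,\mathbb{E}|X|^{2p}<\infty$. Both conditions thus collapse to the single moment hypothesis.

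The main obstacle I anticipate is precisely controlling these double integrals uniformly across the two ranges of $p$: securing the Fubini interchange, the slowly varying asymptotics of $\int \mathrm{d}u/\Log u$ and of $\Log\left[a(t)b(t)/s(t)\right]$, and the behaviour of the $u$-integral in (f) near its lower limit. For the last point the monotone extension $b(\cdot)$ must be taken bounded away from $0$ near the origin, so that the small-$u$ part of (f) contributes only a harmless constant rather than an integral that could diverge for small $p$. Once (a)--(g) have been checked for $C_{0}X$, Theorem~\ref{thr:1} delivers the complete convergence $\sum_{n}\mathbb{P}\{|b_{n}^{-1}\sum_{k}c_{n,k}(X_{n,k}-\mathbb{E}X_{n,k})|>\varepsilon\}<\infty$, and the almost sure convergence follows by the Borel--Cantelli lemma.
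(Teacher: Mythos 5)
Your overall plan --- feeding Theorem~\ref{thr:1} the choices $b_{n}=n^{1/p}$, a truncation level $a_{n}$ of order $n^{1/p}$, and $s_{n}$ read off from condition (a), with the computation bifurcating at $p=1$ --- is exactly the paper's route for $p>1/2$, and your Fubini reductions of (e) and (f) to $\mathbb{E}\abs{X}^{2p}<\infty$ are the right computations there. But there are two genuine gaps. First, the range $0<p\leqslant 1/2$ cannot be reached through Theorem~\ref{thr:1} at all: that theorem hypothesizes $X\in\mathscr{L}_{1}$, and your own reduction of (f) (with $b(\cdot)$ bounded away from $0$ near the origin, so that $\int_{0}^{\lfloor a^{-1}(t)\rfloor}u^{1-1/p}\,\mathrm{d}u$ stays bounded in $t$ because its exponent is at most $-1$) collapses (f) to $\int_{0}^{\infty}\mathbb{P}\{\abs{X}>t\}\,\mathrm{d}t=\mathbb{E}\abs{X}$. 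Neither requirement follows from $\mathbb{E}\abs{X}^{2p}<\infty$ when $2p\leqslant 1$. The paper abandons Theorem~\ref{thr:1} on this range and instead applies the Fuk--Nagaev-type bound of Lemma~\ref{lem:2} directly with $\lambda>\alpha+1$ (together with Corollary 2.2 of Gut), obtaining the conclusion without centering; your proposal has no substitute for this step.

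Second, your repair of condition (d) undoes your verification of (e). With $a_{n}=n^{1/p}(\Log \, n)^{-\theta}$ the generalized inverse satisfies $a^{-1}(y)\asymp y^{p}(\Log \, y)^{p\theta}$, and the very Fubini reduction you describe turns (e) into
\begin{equation*}
\int_{0}^{\infty} t \, \mathbb{P}\left\{a^{-1}\left(\abs{X}\right)>t\right\}\mathrm{d}t \asymp \mathbb{E}\left[\abs{X}^{2p}\left(\Log \, \abs{X}\right)^{2p\theta}\right],
\end{equation*}
a strictly stronger moment condition than the one assumed. The paper sidesteps this by keeping $a$ a pure power and rescaling it only by a $\delta$-dependent constant, $a_{n}=(2-p)n^{1/p}/(2\delta p)$ for $p\geqslant 1$ (and $a_{n}=n^{1/p}/(2\delta)$ for $1/2<p<1$), so that the ratio in (d) tends to $2$ while $a^{-1}(t)$ remains comparable to $t^{p}$ and (e), (f) still reduce to $\mathbb{E}\abs{X}^{2p}<\infty$. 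A smaller point: a single global sign flip does not reduce to nonnegative weights when one row contains $c_{n,k}$ of both signs, since multiplying by a negative constant interchanges UEND and LEND only for the whole row at once; the paper splits $c_{n,k}=c_{n,k}^{+}-c_{n,k}^{-}$ and treats the two nonnegative-weight sums separately before invoking the stability of END under nondecreasing transformations.
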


\begin{remark}
The previous statement extends Theorem 2.1 of \cite{Sung12} not only allowing $p < 1$ and enlarging the class of random triangular arrays (recall that arrays of row-wise negatively dependent random variables are arrays of row-wise END random variables with $M_{n} = 1$ for all $n$) but also discarding its condition (2.4). In fact, supposing $\{X_{n,k}, \, 1 \leqslant k \leqslant n, \, n \geqslant 1 \}$ and $\{a_{n,k}, \, 1 \leqslant k \leqslant n, \, n \geqslant 1 \}$ as in Theorem 2.1 of \cite{Sung12}, and $c_{n,k} = n^{1/p}  a_{n,k}$ in Corollary~\ref{cor:1} we get that $\sum_{k=1}^{n} a_{n,k} X_{n,k}$ converges completely to zero provided only $\max_{1 \leqslant k \leqslant n} \abs{a_{n,k}} = O \left(n^{-1/p} \right)$, $n \rightarrow \infty$. Furthermore, Corollary~\ref{cor:1} still improves assumption (4.11) and the moment condition presented in Corollary 4.4 of \cite{Shen16}.
\end{remark}

Our last result extends Theorem 1 of \cite{Lita15} to widely orthant dependent sequences of random variables with dominating sequence $\{M_{n}, \, n \geqslant 1 \}$, that is, to random sequences $\{X_{n}, \, n \geqslant 1 \}$ such that, for each $n \geqslant 1$, there exists some finite positive number $M_{n}$ satisfying
\begin{equation}\label{eq:2.1}
\mathbb{P} \left(\bigcap_{k=1}^{n} \left\{X_{k} > x_{k} \right\} \right) \leqslant M_{n} \prod_{k=1}^{n} \mathbb{P} \left\{X_{k} > x_{k} \right\}
\end{equation}
and
\begin{equation}\label{eq:2.2}
\mathbb{P} \left(\bigcap_{k=1}^{n} \left\{X_{k} \leqslant x_{k} \right\} \right) \leqslant M_{n} \prod_{k=1}^{n} \mathbb{P} \left\{X_{k} \leqslant x_{k} \right\}
\end{equation}
for all real numbers $x_{1}, \ldots,x_{n}$ (see \cite{Chen13}, page $116$). Note that in \eqref{eq:2.1} and \eqref{eq:2.2} we are taking $M_{n} := \max\{g_{U}(n), g_{L}(n) \}$ with $g_{U}(n)$ and $g_{L}(n)$ as in Definition 1.1 of \cite{Chen13}.

\begin{theorem}\label{thr:2}
If $\{X_{n}, \, n \geqslant 1 \}$ is a sequence of widely orthant dependent random variables with dominating sequence $\{M_{n}, \, n \geqslant 1 \}$ satisfying $M_{n} = O\left(n^{\alpha} \right)$, $n \rightarrow \infty$  for some $\alpha > 0$, stochastically dominated by a random variable $X \in \mathscr{L}_{p}$ for some $1 < p < 2$, and $\left\{c_{n,k}, \, 1 \leqslant k \leqslant n, n \geqslant 1 \right\}$ is an array of constants such that
\begin{equation}\label{eq:2.3}
\max_{1 \leqslant k \leqslant n} \abs{c_{n,k}} = O(1), \quad n \rightarrow \infty
\end{equation}
then
\begin{equation*}
\frac{1}{n^{1/p} \, \Log^{1 - 1/p} n} \sum_{k=1}^{n} c_{n,k} \left(X_{k} - \mathbb{E} \, X_{k} \right) \overset{\textnormal{a.s.}}{\longrightarrow} 0.
\end{equation*}
\end{theorem}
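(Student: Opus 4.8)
The plan is to argue almost sure convergence \emph{by hand} along a subsequence rather than to invoke Theorem~\ref{thr:1} directly, because at the scale $b_{n}=n^{1/p}\,\Log^{1-1/p} n$ the full series $\sum_{n}\mathbb{P}\{|b_{n}^{-1}\sum_{k}c_{n,k}(X_{k}-\mathbb{E}X_{k})|>\varepsilon\}$ diverges: the expected number of exceedances of level $b_{n}$ is $n\,\mathbb{P}\{X^{*}>b_{n}\}\asymp(\Log n)^{-(p-1)}$, which is not summable, so complete convergence fails and the hypotheses of Theorem~\ref{thr:1} cannot hold for these constants. First I would pass from stochastic domination to weak mean domination: since $\max_{k}|c_{n,k}|=O(1)$ and $X\in\mathscr{L}_{p}\subset\mathscr{L}_{1}$, the array $\{c_{n,k}X_{k}\}$ is weakly mean dominated by some $X^{*}=C|X|\in\mathscr{L}_{p}$. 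Because the END (hence WOD) property is preserved under coordinatewise nondecreasing, and separately under coordinatewise nonincreasing, maps with the same dominating sequence, I would split $c_{n,k}=c_{n,k}^{+}-c_{n,k}^{-}$ and treat the two same-sign weighted families separately, so that in each the row $\{c_{n,k}X_{k},\,1\le k\le n\}$ keeps a dominating sequence of order $n^{\alpha}$.

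Next comes the truncation. Put $\tilde{X}_{k}=(-k^{1/p})\vee(X_{k}\wedge k^{1/p})$, a nondecreasing (hence END-preserving) clipping at level $k^{1/p}$. Since $\mathbb{E}|X|^{p}<\infty$ gives $\sum_{k}\mathbb{P}\{|X_{k}|>k^{1/p}\}\le C\sum_{k}\mathbb{P}\{(X^{*})^{p}>k\}<\infty$, Borel--Cantelli shows $X_{k}=\tilde{X}_{k}$ for all large $k$ a.s.; hence $b_{n}^{-1}\sum_{k}c_{n,k}(X_{k}-\tilde{X}_{k})$ is eventually a fixed finite sum divided by $b_{n}\to\infty$ and tends to $0$ a.s. The deterministic centering defect is controlled by $b_{n}^{-1}\sum_{k=1}^{n}|c_{n,k}|\,\mathbb{E}|X_{k}-\tilde{X}_{k}|\le C\,b_{n}^{-1}\sum_{k=1}^{n}\mathbb{E}\,X^{*}I_{\{X^{*}>k^{1/p}\}}\le C\,b_{n}^{-1}\sum_{k=1}^{n}k^{1/p-1}\le C\,n^{1/p}/b_{n}=C/\Log^{1-1/p} n\to0$, which is exactly where the logarithmic factor in $b_{n}$ is used. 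Thus everything reduces to showing $b_{n}^{-1}\sum_{k=1}^{n}c_{n,k}(\tilde{X}_{k}-\mathbb{E}\tilde{X}_{k})\to0$ a.s. for the bounded clipped variables.

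For this I would fix $\varepsilon>0$, choose a subsequence $\{n_{j}\}$, and bound $\mathbb{P}\{\max_{n_{j}\le n<n_{j+1}}|b_{n}^{-1}\sum_{k}c_{n,k}(\tilde{X}_{k}-\mathbb{E}\tilde{X}_{k})|>\varepsilon\}$ by a maximal version of the Bennett inequality for END variables, using the monotonicity of $b_{n}$ to replace $b_{n}$ by $b_{n_{j}}$ on each block. The key device is a second split of $\tilde{X}_{k}$ at an intermediate level $y_{n}\asymp n^{1/p}\Log^{-\phi} n$ with $\phi>1/p$: the \emph{bulk} part, bounded by $y_{n}$, sits in the Gaussian range of Bennett's inequality and has tail $\exp\{-c\,\varepsilon^{2}\Log^{\,1+\eta} n\}$ for some $\eta>0$, which overpowers $M_{n}=O(n^{\alpha})$ and is summable along $\{n_{j}\}$. \textbf{The main obstacle} is the \emph{intermediate} part carrying the values in $[y_{n},n^{1/p}]$: its exceedances occur at rate $\asymp(\Log n)^{p\phi}\to\infty$, so they cannot be discarded by Borel--Cantelli, yet a single application of Bennett to them lands in the Poisson regime and yields only $\exp\{-c\,\varepsilon\,(\Log n)^{1-1/p}\log\Log n\}$, too weak to absorb $n^{\alpha}$. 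I expect to overcome this by a layered Fuk--Nagaev-type decomposition of $[y_{n},n^{1/p}]$ into $O(\log\Log n)$ dyadic bands and a summation of the band-by-band exponential estimates, the normalization $\Log^{1-1/p} n$ being precisely what makes the total beat $M_{n}$; a Kronecker-lemma argument applied to the crude majorant $\sum_{k}|\tilde{X}_{k}|\,I_{\{|X_{k}|>y_{k}\}}/b_{k}$ then absorbs any residual terms. Finally, Borel--Cantelli along $\{n_{j}\}$ together with the block maximal bound gives $\limsup_{n}|b_{n}^{-1}\sum_{k}c_{n,k}(\tilde{X}_{k}-\mathbb{E}\tilde{X}_{k})|\le\varepsilon$ a.s. for every $\varepsilon>0$, completing the proof.
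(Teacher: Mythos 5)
Your overall architecture (split the weights by sign, truncate, exponential inequality for the bounded part, separate first-moment/maximal treatment of the tail) matches the paper's, and your opening observation that one should not expect full complete convergence at the scale $b_{n}=n^{1/p}\,\Log^{1-1/p} n$ is consistent with what the paper actually does (it applies its Lemma~3 only to the truncated part and handles the remainder by a blockwise Borel--Cantelli argument). But there is a genuine gap, and it sits exactly where you flag ``the main obstacle'': your truncation level $k^{1/p}$ is too high, it creates the intermediate band $[y_{n},n^{1/p}]$, and none of the devices you propose for that band actually closes it. A single Bennett application there gives, as you say, only $\exp\{-c\,(\Log n)^{1-1/p}\log\Log n\}$, which cannot absorb $M_{n}=O(n^{\alpha})$. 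The dyadic-band refinement does not help at the top bands, where the ratio $b_{n}/(\text{band level})$ is still only $O(\Log^{1-1/p} n)=o(\log n)$. And the ``crude majorant plus Kronecker'' fallback provably fails: with $y_{k}\asymp k^{1/p}\Log^{-\phi}k$ one has $\mathbb{E}\,\abs{X_{k}}I_{\{\abs{X_{k}}>y_{k}\}}\leqslant C\,y_{k}^{1-p}=C\,k^{1/p-1}\Log^{\phi(p-1)}k$, so $\sum_{k}\mathbb{E}\abs{X_{k}}I_{\{\abs{X_{k}}>y_{k}\}}/b_{k}\asymp\sum_{k}k^{-1}\Log^{(p-1)(\phi-1/p)}k$, which diverges precisely when $\phi>1/p$ --- the regime you need for the bulk part. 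So the intermediate part is neither summable in first moment nor controllable by your exponential bounds; the proof as sketched does not go through.

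The paper avoids this dichotomy by truncating \emph{lower}, at $a_{k}=k^{1/p}/\Log^{1/p}k$ rather than $k^{1/p}$. Then $b_{n}/a_{n}=\Log n$, so the Bennett exponent for the truncated part is of order $\Log n\cdot\Log\left(a_{n}b_{n}/s_{n}\right)$ with $a_{n}b_{n}/s_{n}\to\infty$ (condition (v) of Lemma~3 holds with $\liminf=\infty$), which beats $n^{\alpha}$ for every $\alpha$; there is no intermediate regime at all. The price is that $\sum_{k}\mathbb{P}\{\abs{X_{k}}>a_{k}\}$ is no longer finite under $\mathbb{E}\abs{X}^{p}<\infty$ (that would need $\mathbb{E}\abs{X}^{p}\Log\abs{X}<\infty$), so one cannot dispose of the tail by Borel--Cantelli on individual exceedances as you do; instead the paper bounds $\max_{2^{m}\leqslant n<2^{m+1}}$ of the normalized tail sum by Markov's inequality and uses the convergence of $\sum_{k}k^{-1/p}\Log^{-(1-1/p)}k\;\mathbb{E}\left[\abs{X_{k}}I_{\{\abs{X_{k}}>a_{k}\}}+a_{k}I_{\{\abs{X_{k}}>a_{k}\}}\right]$, which does hold under $\mathbb{E}\abs{X}^{p}<\infty$ (Lemma~4 of the author's earlier paper). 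If you adopt that truncation level, your bulk/intermediate split becomes unnecessary and your argument collapses onto the paper's.
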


\section{Lemmas and proofs}

We begin this section by presenting a Bennett inequality type (see \cite{Bennett62}) for triangular arrays of row-wise UEND random variables with dominating sequence $\{M_{n}, \, n \geqslant 1 \}$ which plays a central role in subsequent proofs.

\begin{lemma}\label{lem:1}
Let $\{X_{n,k}, \, 1 \leqslant k \leqslant n, \, n \geqslant 1 \}$ be a triangular array of zero-mean row-wise UEND random variables with dominating sequence $\{M_{n}, \, n \geqslant 1 \}$ and $\{a_{n} \}$, $\{s_{n} \}$ sequences of positive constants. If $X_{n,k} \leqslant a_{n}$ a.s. for every $1 \leqslant k \leqslant n$, $n \geqslant 1$ and $\sum_{k=1}^{n} \mathbb{E} \, X_{n,k}^{2} \leqslant s_{n}$ then
\begin{equation*}
\forall \varepsilon > 0, \quad \mathbb{P} \left\{\sum_{k=1}^{n} X_{n,k} > \varepsilon \right\} \leqslant M_{n} \exp \left[\frac{\varepsilon}{a_{n}} - \left(\frac{\varepsilon}{a_{n}} + \frac{s_{n}}{a_{n}^{2}} \right) \log \left(1 + \frac{\varepsilon a_{n}}{s_{n}} \right) \right].
\end{equation*}
\end{lemma}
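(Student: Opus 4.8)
The plan is to run the classical Chernoff--Bennett argument, replacing the independence-based factorization of the moment generating function by an inequality extracted from the UEND hypothesis. First I would fix $t>0$ and apply the exponential Markov inequality to write $\mathbb{P}\{\sum_{k=1}^n X_{n,k} > \varepsilon\} \leqslant e^{-t\varepsilon}\,\mathbb{E}\,\exp(t\sum_{k=1}^n X_{n,k}) = e^{-t\varepsilon}\,\mathbb{E}\prod_{k=1}^n e^{tX_{n,k}}$, so that the whole problem reduces to controlling the joint exponential moment $\mathbb{E}\prod_{k=1}^n e^{tX_{n,k}}$ and then optimizing in $t$.

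The first genuine step is to convert the tail-event definition of UEND into a factorization inequality for the exponential moment, i.e. to show $\mathbb{E}\prod_{k=1}^n e^{tX_{n,k}} \leqslant M_n \prod_{k=1}^n \mathbb{E}\,e^{tX_{n,k}}$ for $t>0$. The device is the integral representation $e^{tx} = \int_{-\infty}^{\infty} t e^{ty}\mathbbm{1}\{x>y\}\,\mathrm{d}y$, valid since $t>0$, which turns each factor into a weighted superposition of indicators of tail events. Substituting and applying Tonelli's theorem (all integrands are non-negative) gives $\mathbb{E}\prod_{k=1}^n e^{tX_{n,k}} = \int_{\mathbb{R}^n} t^n \prod_k e^{ty_k}\,\mathbb{P}(\bigcap_{k=1}^n\{X_{n,k}>y_k\})\,\mathrm{d}y$; the UEND bound replaces the joint probability by $M_n\prod_k \mathbb{P}(X_{n,k}>y_k)$, and re-assembling the product (again by Tonelli) produces exactly $M_n\prod_k\mathbb{E}\,e^{tX_{n,k}}$.

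Next I would bound each single exponential moment by the standard Bennett estimate. Using that $x\mapsto (e^{tx}-1-tx)/x^2$ is nondecreasing for $t>0$, the a.s. bound $X_{n,k}\leqslant a_n$ yields $e^{tX_{n,k}} \leqslant 1 + tX_{n,k} + X_{n,k}^2\,(e^{ta_n}-1-ta_n)/a_n^2$; taking expectations, using $\mathbb{E}\,X_{n,k}=0$ and $1+u\leqslant e^u$, gives $\mathbb{E}\,e^{tX_{n,k}} \leqslant \exp\big((e^{ta_n}-1-ta_n)\,\mathbb{E}\,X_{n,k}^2/a_n^2\big)$. Multiplying over $k$, using $\sum_k\mathbb{E}\,X_{n,k}^2\leqslant s_n$ together with $e^{ta_n}-1-ta_n\geqslant 0$, and reinstating the $e^{-t\varepsilon}$ factor, I obtain $\mathbb{P}\{\sum_k X_{n,k}>\varepsilon\}\leqslant M_n\exp\big(-t\varepsilon + s_n(e^{ta_n}-1-ta_n)/a_n^2\big)$ for every $t>0$. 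Finally I would minimize the exponent over $t$: differentiating shows the optimal choice is $t=a_n^{-1}\log(1+\varepsilon a_n/s_n)$, and substituting it collapses the exponent to $\varepsilon/a_n - (\varepsilon/a_n + s_n/a_n^2)\log(1+\varepsilon a_n/s_n)$, which is precisely the asserted bound.

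The only non-routine point is the factorization step: the UEND assumption is stated solely for tail events, so it must be lifted to a statement about exponential moments, and the integral-representation plus Tonelli argument is what makes this rigorous. Everything after it is the textbook Bennett computation, with the extra multiplicative factor $M_n$ carried harmlessly through. I would also dispose of the degenerate case $s_n=0$ (where $X_{n,k}=0$ a.s. and the inequality is trivial) so that I may assume $s_n>0$ when performing the optimization in $t$.
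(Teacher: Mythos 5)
Your proof is correct and follows essentially the same route as the paper's: exponential Markov inequality, factorization of the joint exponential moment via the UEND hypothesis, the single-variable Bennett bound from the monotonicity of $x \mapsto x^{-2}(e^{x}-1-x)$, and the same optimal choice $t = a_{n}^{-1}\log(1+\varepsilon a_{n}/s_{n})$. The only difference is that where you prove the factorization $\mathbb{E}\prod_{k} e^{tX_{n,k}} \leqslant M_{n}\prod_{k}\mathbb{E}\,e^{tX_{n,k}}$ from scratch by the integral representation of $e^{tx}$ and Tonelli's theorem, the paper simply cites an external lemma (Lemma 1 of \cite{Lita16b}) on nondecreasing transformations of UEND arrays, so your version is self-contained at that step.
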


\begin{proof}
Consider the function $g\colon \mathbb{R} \longrightarrow \mathbb{R}$ defined by $g(x) = x^{-2} \left(e^{x} - 1 - x \right)$, $x \neq 0$ and $g(0) = 1/2$. Since $g$ is nonnegative, increasing and convex on $\mathbb{R}$ (see \cite{Teicher79}, page $295$), we have
\begin{equation}\label{eq:3.1}
\mathbb{E} \exp \left(t_{n} X_{n,k} \right) \leqslant 1 + t_{n}^{2} \, g \left(t_{n} a_{n} \right) \mathbb{E} X_{n,k}^{2} \leqslant \exp \left[\frac{\exp\left(t_{n} a_{n} \right) - 1 - t_{n} a_{n}}{a_{n}^{2}} \, \mathbb{E} X_{n,k}^{2} \right]
\end{equation}
for any sequence $\{t_{n} \}$ of positive constants. Since $\{X_{n,k}, \, 1 \leqslant k \leqslant n, \, n \geqslant 1 \}$ is row-wise UEND with dominating sequence $\{M_{n}, \, n \geqslant 1 \}$ we obtain
\begin{equation*}
\mathbb{E} \exp \left(t_{n} \sum_{k=1}^{n} X_{n,k} \right) \leqslant M_{n} \prod_{k=1}^{n} \mathbb{E} \exp \left(t_{n} X_{n,k} \right)
\end{equation*}
via Lemma 1 of \cite{Lita16b} with $f_{n,k}(x) = e^{t_{n}x}$ $(t_{n} > 0)$, and from \eqref{eq:3.1} we get
\begin{align*}
\mathbb{E} \exp \left(t_{n} \sum_{k=1}^{n} X_{n,k} \right) &\leqslant M_{n} \exp \left[\frac{\exp\left(t_{n} a_{n} \right) - 1 - t_{n} a_{n}}{a_{n}^{2}} \sum_{k=1}^{n} \mathbb{E} \, X_{n,k}^{2} \right] \\
&\leqslant M_{n} \exp \left[\frac{\exp\left(t_{n} a_{n} \right) - 1 - t_{n} a_{n}}{a_{n}^{2}} s_{n} \right].
\end{align*}
Fixing $\varepsilon > 0$ arbitrarily we have
\begin{align*}
\mathbb{P} \left\{\sum_{k=1}^{n} X_{n,k} > \varepsilon \right\} &\leqslant \exp \left(-\varepsilon t_{n} \right) \mathbb{E} \exp \left(t_{n} \sum_{k=1}^{n} X_{n,k} \right) \\
&\leqslant M_{n} \exp \left[-\varepsilon t_{n} + \frac{\exp\left(t_{n} a_{n} \right) - 1 - t_{n} a_{n}}{a_{n}^{2}} s_{n} \right]
\end{align*}
according to Chebyshev inequality. The right-hand side of the above inequality is minimized when $t_{n} = \log \left(1 + \dfrac{\varepsilon a_{n}}{s_{n}} \right)^{\frac{1}{a_{n}}}$ which yields
\begin{equation*}
\mathbb{P} \left\{\sum_{k=1}^{n} X_{n,k} > \varepsilon \right\} \leqslant M_{n} \exp \left[\frac{\varepsilon}{a_{n}} - \left(\frac{\varepsilon}{a_{n}} + \frac{s_{n}}{a_{n}^{2}} \right) \log \left(1 + \frac{\varepsilon a_{n}}{s_{n}} \right) \right].
\end{equation*}
The proof is complete.
\end{proof}

\medskip

For the sake of a comparison of Bennet's inequality and Lemma 2 in \cite{Lita16a} (i.e. Bernstein's inequality), suppose that $\lvert X_{n,k} \rvert \leqslant a_{n}$ a.s. for any $1 \leqslant k \leqslant n$, $n \geqslant 1$ and $s_{n} := \sum_{k=1}^{n} \mathbb{E} \, X_{n,k}^{2}$. Hence, all assumptions of the aforementioned Lemma~\ref{lem:1} are satisfied. Moreover, the conditions in Lemma 2 of \cite{Lita16a} are also verified, and
\begin{equation*}
-\frac{\varepsilon^{2}}{2(\varepsilon a_{n} + s_{n})} > \frac{\varepsilon}{a_{n}} - \left(\frac{\varepsilon}{a_{n}} + \frac{s_{n}}{a_{n}^{2}} \right) \log \left(1 + \frac{\varepsilon a_{n}}{s_{n}} \right)
\end{equation*}
for all $\varepsilon \geqslant 5 s_{n}/a_{n}$; indeed, it is straightforward to see that the function
$x \mapsto 2 + x/(1 + x) - 2(1 - 1/x) \log(1 + x)$ is negative and non-increasing for all $x \geqslant 5$ being also asymptotically equivalent to $-2 \log x$ as $x \rightarrow \infty$. Therefore, it follows that for large values of $\varepsilon$ Bennett's bound is sharper than Bernstein's bound.

The statement below is a Fuk-Nagaev inequality type (see \cite{Fuk71}) announced for arrays of row-wise END random variables. The proof follows the same steps of the original one in \cite{Fuk71}.

\begin{lemma}\label{lem:2}
Let $0 < p \leqslant 1$. If $\{X_{n,k}, \, 1 \leqslant k \leqslant n, \, n \geqslant 1 \}$ is a triangular array of row-wise END random variables with dominating sequence $\{M_{n}, \, n \geqslant 1 \}$ such that $\mathbb{E} \abs{X_{n,k}}^{p} < \infty$, for all $1 \leqslant k \leqslant n$, $n \geqslant 1$  then for all $\varepsilon, \lambda > 0$,
\begin{equation*}
\mathbb{P} \left\{\abs{\sum_{k=1}^{n} X_{n,k}} > \varepsilon \right\} \leqslant \sum_{k=1}^{n} \mathbb{P} \left\{\abs{X_{n,k}} > \frac{\varepsilon}{\lambda} \right\} +  2 M_{n} e^{\lambda} \left(1 + \frac{\varepsilon^{p}}{\lambda^{p-1} \sum_{k=1}^{n} \mathbb{E} \abs{X_{n,k}}^{p}}  \right)^{-\lambda}.
\end{equation*}
\end{lemma}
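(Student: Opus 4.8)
The plan is to adapt the classical Fuk--Nagaev truncation argument of \cite{Fuk71} to the END setting, routing all the dependence through the dominating sequence $\{M_{n}\}$. Fix $\varepsilon, \lambda > 0$ and set $y := \varepsilon/\lambda$. First I would split the event as $\left\{\abs{\sum_{k=1}^{n} X_{n,k}} > \varepsilon \right\} = \left\{\sum_{k} X_{n,k} > \varepsilon \right\} \cup \left\{\sum_{k} (-X_{n,k}) > \varepsilon \right\}$ and treat only the upper tail, since the lower tail is handled identically once one checks that $\{-X_{n,k}\}$ is again row-wise UEND with the \emph{same} dominating sequence $\{M_{n}\}$. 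This reflection follows from the LEND property of $\{X_{n,k}\}$: writing $\{-X_{n,k} > x_{k}\} = \{X_{n,k} < -x_{k}\}$ and approximating the strict inequalities from below by $\{X_{n,k} \leqslant -x_{k} - 1/m\}$, letting $m \to \infty$ and using continuity of $\mathbb{P}$, turns \eqref{eq:2.2}-type bounds into \eqref{eq:2.1}-type bounds. Because $\sum_{k} \mathbb{E}\abs{-X_{n,k}}^{p} = \sum_{k} \mathbb{E}\abs{X_{n,k}}^{p}$, the two exponential contributions coincide and yield the factor $2$, while the truncation terms recombine (using that $\{X_{n,k} > y\}$ and $\{X_{n,k} < -y\}$ are disjoint with union $\{\abs{X_{n,k}} > y\}$) into the single sum $\sum_{k=1}^{n} \mathbb{P}\{\abs{X_{n,k}} > \varepsilon/\lambda\}$.

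For the upper tail, put $Y_{n,k} := \min\{X_{n,k}, y\}$. On $\bigcap_{k} \{X_{n,k} \leqslant y\}$ one has $Y_{n,k} = X_{n,k}$, so $\left\{\sum_{k} X_{n,k} > \varepsilon\right\} \subseteq \left\{\sum_{k} Y_{n,k} > \varepsilon\right\} \cup \bigcup_{k} \{X_{n,k} > y\}$, whence $\mathbb{P}\left\{\sum_{k} X_{n,k} > \varepsilon\right\} \leqslant \mathbb{P}\left\{\sum_{k} Y_{n,k} > \varepsilon\right\} + \sum_{k} \mathbb{P}\{X_{n,k} > y\}$. For the truncated sum I would apply the exponential Chebyshev inequality with a parameter $h > 0$ to be optimized, together with the dependence structure: since $x \mapsto e^{h\min\{x,y\}}$ is nondecreasing, Lemma 1 of \cite{Lita16b} applies with $f_{n,k}(x) = e^{h\min\{x,y\}}$ and gives $\mathbb{E}\exp\left(h\sum_{k} Y_{n,k}\right) \leqslant M_{n} \prod_{k} \mathbb{E}\exp\left(h Y_{n,k}\right)$, exactly as in the proof of Lemma~\ref{lem:1}.

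The key per-summand estimate is the elementary inequality $e^{hx} \leqslant 1 + \frac{e^{hy}-1}{y^{p}}\abs{x}^{p}$, valid for every $x \leqslant y$. It holds trivially for $x \leqslant 0$ (the left side is $\leqslant 1$ and the right side is $\geqslant 1$), and for $0 \leqslant x \leqslant y$ it follows because $e^{hx}$ is convex while $1 + \frac{e^{hy}-1}{y^{p}}x^{p}$ is concave (as $0 < p \leqslant 1$) and the two sides agree at $x = 0$ and $x = y$; thus the convex side lies below the common chord and hence below the concave side. Since $\abs{Y_{n,k}} \leqslant \abs{X_{n,k}}$, this yields $\mathbb{E} e^{hY_{n,k}} \leqslant 1 + \frac{e^{hy}-1}{y^{p}}\mathbb{E}\abs{X_{n,k}}^{p}$, and by $1 + t \leqslant e^{t}$ we get $\prod_{k} \mathbb{E} e^{hY_{n,k}} \leqslant \exp\left(\frac{e^{hy}-1}{y^{p}} S\right)$ with $S := \sum_{k=1}^{n} \mathbb{E}\abs{X_{n,k}}^{p}$. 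Collecting the bounds, $\mathbb{P}\left\{\sum_{k} Y_{n,k} > \varepsilon\right\} \leqslant M_{n} \exp\left(-h\varepsilon + \frac{e^{hy}-1}{y^{p}} S\right)$.

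Finally I would optimize in $h$. Writing $u := \varepsilon^{p}/(\lambda^{p-1} S)$ and substituting $y = \varepsilon/\lambda$, a direct computation gives $S/y^{p} = \lambda/u$, so the exponent becomes $\lambda\left(-hy + (e^{hy}-1)/u\right)$; choosing $h$ so that $hy = \log(1+u)$ collapses it to $\lambda(1 - \log(1+u))$, i.e. the truncated upper tail is at most $M_{n} e^{\lambda}(1+u)^{-\lambda}$. Adding the truncation term $\sum_{k} \mathbb{P}\{X_{n,k} > y\} \leqslant \sum_{k} \mathbb{P}\{\abs{X_{n,k}} > \varepsilon/\lambda\}$ and doubling the exponential term for the lower tail produces exactly the asserted inequality. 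I expect the main obstacle to be the correct handling of the dependence rather than any delicate analysis: one must verify that both the nondecreasing truncation $\min\{\,\cdot\,,y\}$ and the reflection $X_{n,k} \mapsto -X_{n,k}$ preserve the relevant one-sided extended negative dependence (so that Lemma 1 of \cite{Lita16b} is legitimately applicable in each case, and the dominating sequence is unchanged), the $p$-th moment moment-generating-function bound and the optimization over $h$ being then routine.
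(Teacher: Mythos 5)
Your proposal is correct and follows essentially the same route as the paper's proof: truncation at $\varepsilon/\lambda$ via $\min\{X_{n,k},y\}$, exponential Chebyshev combined with Lemma 1 of \cite{Lita16b} to factor the moment generating function up to $M_{n}$, the $p$-th moment bound on $\mathbb{E}e^{hY_{n,k}}$ (your convexity/concavity argument is equivalent to the paper's monotonicity of $u \mapsto (e^{hu}-1)/u^{p}$), the same choice $hy=\log(1+u)$, and reflection for the lower tail. The only cosmetic difference is that the paper keeps the truncation level $\delta_{n}$ general and only sets $\delta_{n}=\varepsilon/\lambda$ at the very end.
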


\begin{proof}
Let $\{ \delta_{n} \}$ be a sequence of positive constants and consider the random variables $T_{n,k} := \min \left(X_{n,k}, \delta_{n} \right)$, $1 \leqslant k \leqslant n$, $n \geqslant 1$. Hence,
\begin{equation*}
\left\{\omega\colon \sum_{k=1}^{n} X_{n,k} > \varepsilon \right\} \subset \left\{\omega\colon \sum_{k=1}^{n} T_{n,k} \neq \sum_{k=1}^{n} X_{n,k} \right\} \cup \left\{\omega\colon \sum_{k=1}^{n} T_{n,k} > \varepsilon \right\}
\end{equation*}
and for all $t_{n} > 0$,
\begin{equation}\label{eq:3.2}
\begin{split}
\mathbb{P} \left\{\sum_{k=1}^{n} X_{n,k} > \varepsilon \right\} &\leqslant \mathbb{P} \left\{\sum_{k=1}^{n} T_{n,k} \neq \sum_{k=1}^{n} X_{n,k} \right\} + \mathbb{P} \left\{\sum_{k=1}^{n} T_{n,k} > \varepsilon \right\} \\
&\leqslant \sum_{k=1}^{n} \mathbb{P} \left\{X_{n,k} > \delta_{n} \right\} + \exp \left(-\varepsilon t_{n} \right) \mathbb{E} \exp \left(t_{n} \sum_{k=1}^{n} T_{n,k} \right) \\
&\leqslant \sum_{k=1}^{n} \mathbb{P} \left\{X_{n,k} > \delta_{n} \right\} + M_{n} \exp \left(-\varepsilon t_{n} \right) \prod_{k=1}^{n} \mathbb{E} \exp \left(t_{n} T_{n,k} \right)
\end{split}
\end{equation}
provided that $\left\{T_{n,k}, \, 1 \leqslant k \leqslant n, \, n \geqslant 1 \right\}$ is row-wise END (see Lemma 1 of \cite{Lita16b}). Fixing $0 < p \leqslant 1$, we obtain
\begin{align*}
\mathbb{E} \exp \left(t_{n} T_{n,k} \right) &= \int_{-\infty}^{\delta_{n}} \left(e^{t_{n}u} - 1 \right) \mathrm{d}\mathbb{P} \left\{T_{n,k} \leqslant u \right\} + \int_{\delta_{n}}^{\infty} \left(e^{t_{n} \delta_{n}} - 1 \right) \mathrm{d}\mathbb{P} \left\{T_{n,k} \leqslant u \right\} + 1 \\
&\leqslant \int_{0}^{\delta_{n}} \left(e^{t_{n}u} - 1 \right) \mathrm{d}\mathbb{P} \left\{T_{n,k} \leqslant u \right\} + \int_{\delta_{n}}^{\infty} \left(e^{t_{n} \delta_{n}} - 1 \right) \mathrm{d}\mathbb{P} \left\{T_{n,k} \leqslant u \right\} + 1 \\
&\leqslant \frac{e^{t_{n} \delta_{n}} - 1}{\delta_{n}^{p}} \int_{0}^{\delta_{n}} u^{p} \mathrm{d}\mathbb{P} \left\{T_{n,k} \leqslant u \right\} + \frac{e^{t_{n} \delta_{n}} - 1}{\delta_{n}^{p}} \int_{\delta_{n}}^{\infty} u^{p} \mathrm{d}\mathbb{P} \left\{T_{n,k} \leqslant u \right\} + 1 \\
&\leqslant 1 + \frac{e^{t_{n} \delta_{n}} - 1}{\delta_{n}^{p}} \mathbb{E} \abs{T_{n,k}}^{p} \\
&\leqslant \exp \left(\frac{e^{t_{n} \delta_{n}} - 1}{\delta_{n}^{p}} \mathbb{E} \abs{X_{n,k}}^{p} \right)
\end{align*}
since, for each $n \geqslant 1$, the function $u \mapsto \left(e^{t_{n} u} - 1 \right)/u^{p}$ is nondecreasing on $(0,\infty)$. From the latter inequality and \eqref{eq:3.2}, we get
\begin{equation}\label{eq:3.3}
\mathbb{P} \left\{\sum_{k=1}^{n} X_{n,k} > \varepsilon \right\}\leqslant \sum_{k=1}^{n} \mathbb{P} \left\{X_{n,k} > \delta_{n} \right\} + M_{n} \exp \left[-\varepsilon t_{n} + \frac{e^{t_{n} \delta_{n}} - 1}{\delta_{n}^{p}} \sum_{k=1}^{n} \mathbb{E} \abs{X_{n,k}}^{p} \right].
\end{equation}
Setting $s_{n,p} := \sum_{k=1}^{n} \mathbb{E} \abs{X_{n,k}}^{p}$ and taking $t_{n} = \log \left(1 + \varepsilon \delta_{n}^{p - 1}/\sum_{k=1}^{n} \mathbb{E} \abs{X_{n,k}}^{p} \right)^{1/\delta_{n}}$ in \eqref{eq:3.3}, it follows
\begin{equation*}
\mathbb{P} \left\{\sum_{k=1}^{n} X_{n,k} > \varepsilon \right\} \leqslant \sum_{k=1}^{n} \mathbb{P} \left\{X_{n,k} > \delta_{n} \right\} + M_{n} \exp \left[\frac{\varepsilon}{\delta_{n}} - \frac{\varepsilon}{\delta_{n}} \log \left(1 + \frac{\varepsilon \delta_{n}^{p - 1}}{s_{n,p}} \right) \right].
\end{equation*}
Replacing $X_{n,k}$ by $-X_{n,k}$ and noting that, by Lemma 1 of \cite{Lita16b}, $\left\{-X_{n,k}, \, 1 \leqslant k \leqslant n, \, n \geqslant 1 \right\}$ is still an array of zero-mean row-wise END random variables with dominating sequence $\{M_{n}, \, n \geqslant 1 \}$ satisfying $\mathbb{E} \abs{X_{n,k}}^{p} < \infty$, for all $1 \leqslant k \leqslant n$, $n \geqslant 1$, we have
\begin{equation*}
\mathbb{P} \left\{- \sum_{k=1}^{n} X_{n,k} > \varepsilon \right\} \leqslant \sum_{k=1}^{n} \mathbb{P} \left\{- X_{n,k} > \delta_{n} \right\} + M_{n} \exp \left[\frac{\varepsilon}{\delta_{n}} - \frac{\varepsilon}{\delta_{n}} \log \left(1 + \frac{\varepsilon \delta_{n}^{p - 1}}{s_{n,p}} \right) \right]
\end{equation*}
and
\begin{equation}\label{eq:3.4}
\mathbb{P} \left\{\abs{\sum_{k=1}^{n} X_{n,k}} > \varepsilon \right\} \leqslant \sum_{k=1}^{n} \mathbb{P} \left\{\abs{X_{n,k}} > \delta_{n} \right\} + 2 M_{n} \exp \left[\frac{\varepsilon}{\delta_{n}} - \frac{\varepsilon}{\delta_{n}} \log \left(1 + \frac{\varepsilon \delta_{n}^{p - 1}}{s_{n,p}} \right) \right].
\end{equation}
Considering $\delta_{n} = \varepsilon/\lambda$ $(\lambda > 0)$ in \eqref{eq:3.4}, yields
\begin{equation*}
\mathbb{P} \left\{\abs{\sum_{k=1}^{n} X_{n,k}} > \varepsilon \right\} \leqslant \sum_{k=1}^{n} \mathbb{P} \left\{\abs{X_{n,k}} > \frac{\varepsilon}{\lambda} \right\} + 2 M_{n} e^{\lambda} \left(1 + \frac{\varepsilon^{p}}{\lambda^{p - 1} s_{n,p}} \right)^{-\lambda}
\end{equation*}
finishing the proof.
\end{proof}

\begin{lemma}\label{lem:3}
Let $\{X_{n,k}, \, 1 \leqslant k \leqslant n, \, n \geqslant 1 \}$ be a triangular array of zero-mean row-wise END random variables with dominating sequence $\{M_{n}, \, n \geqslant 1 \}$ and $\{a_{n}\}$, $\{b_{n} \}$, $\{s_{n} \}$ sequences of positive constants. If
\begin{itemize}[align=left]
\item[\textnormal{(i)}] $\abs{X_{n,k}} \leqslant a_{n}$ a.s. for every $1 \leqslant k \leqslant n$, $n \geqslant 1$,

\item[\textnormal{(ii)}] ${\displaystyle \sum_{k=1}^{n} \mathbb{E} \, X_{n,k}^{2} \leqslant s_{n}}$,

\item[\textnormal{(iii)}] ${\displaystyle \frac{s_{n}}{a_{n} b_{n}} = o(1)}$ as $n \rightarrow \infty$,

\item[\textnormal{(iv)}] ${\displaystyle \frac{s_{n}}{a_{n}^{2} \, \Log \, n} = o(1)}$ as $n \rightarrow \infty$,

\item[\textnormal{(v)}] ${\displaystyle \liminf_{n \rightarrow \infty} \frac{b_{n} \, \Log \left(a_{n} b_{n}/s_{n} \right)}{a_{n} \, \Log \left(n^{\delta} \right)}  > 1}$ for all $\delta > 0$,

\item[\textnormal{(vi)}] $M_{n} = O\left(n^{\alpha} \right)$ as $n \rightarrow \infty$ for some $\alpha > 0$,
\end{itemize}
then
\begin{equation*}
\sum_{n=1}^{\infty} \mathbb{P} \left\{\abs{\frac{1}{b_{n}} \sum_{k=1}^{n} X_{n,k}} > \varepsilon \right\} < \infty \quad \text{for all} \; \; \varepsilon > 0.
\end{equation*}
\end{lemma}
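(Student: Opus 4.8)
The plan is to reduce everything to the Bennett bound of Lemma~\ref{lem:1}. Because the array is row-wise END, both $\{X_{n,k}\}$ and $\{-X_{n,k}\}$ are zero-mean row-wise UEND with the same dominating sequence $\{M_{n}\}$ (cf. Lemma 1 of \cite{Lita16b}, exactly as used in the proof of Lemma~\ref{lem:2}); by (i) each is bounded above by $a_{n}$ a.s., and (ii) controls $\sum_{k}\mathbb{E}\,X_{n,k}^{2}\leqslant s_{n}$ for each of them. First I would apply Lemma~\ref{lem:1} to each of these two arrays with the threshold $\varepsilon b_{n}$ in place of $\varepsilon$, and add the two estimates, obtaining
\begin{equation*}
\mathbb{P}\left\{\abs{\frac{1}{b_{n}}\sum_{k=1}^{n} X_{n,k}} > \varepsilon\right\} \leqslant 2 M_{n}\, e^{E_{n}}, \qquad E_{n} := \frac{\varepsilon b_{n}}{a_{n}} - \left(\frac{\varepsilon b_{n}}{a_{n}} + \frac{s_{n}}{a_{n}^{2}}\right)\log\left(1 + \frac{\varepsilon a_{n} b_{n}}{s_{n}}\right).
\end{equation*}
It then suffices to prove $\sum_{n} M_{n}\, e^{E_{n}} < \infty$, and invoking (vi) ($M_{n}=O(n^{\alpha})$) this will follow as soon as $E_{n}\leqslant -(\alpha+2)\Log n$ for all large $n$, since then $M_{n}e^{E_{n}}=O(n^{-2})$. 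So the entire matter becomes an upper estimate on the Bennett exponent $E_{n}$.

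To analyse $E_{n}$ I would set $v_{n} := \varepsilon a_{n} b_{n}/s_{n}$ and factor it exactly as $E_{n} = (s_{n}/a_{n}^{2})\bigl[v_{n} - (v_{n}+1)\log(1+v_{n})\bigr]$. Condition (iii) says precisely that $v_{n}\to\infty$, and for $v\to\infty$ one has $v-(v+1)\log(1+v)\sim -v\log v$; converting this asymptotic into a genuine inequality (valid for all $v$ beyond a fixed $v_{0}$) gives, for $n$ large, $E_{n}\leqslant -\tfrac12 (s_{n}/a_{n}^{2})\,v_{n}\log v_{n} = -\tfrac12 (\varepsilon b_{n}/a_{n})\log v_{n}$, where I used $(s_{n}/a_{n}^{2})v_{n}=\varepsilon b_{n}/a_{n}$. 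Since $\log v_{n} = \log\varepsilon + \log(a_{n}b_{n}/s_{n})$ and $a_{n}b_{n}/s_{n}\to\infty$ (again by (iii)), the additive constant $\log\varepsilon$ is absorbed and $E_{n}\leqslant -\tfrac14(\varepsilon b_{n}/a_{n})\Log(a_{n}b_{n}/s_{n})$ eventually. Here (iv), which guarantees $s_{n}/a_{n}^{2}=o(\Log n)$, is what keeps the subordinate terms negligible against $\Log n$.

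The decisive step is then (v): the hypothesis $\liminf_{n} b_{n}\Log(a_{n}b_{n}/s_{n})/\bigl(a_{n}\Log(n^{\delta})\bigr) > 1$ for every $\delta>0$ forces, for each fixed $\delta$, the eventual inequality $b_{n}\Log(a_{n}b_{n}/s_{n})/a_{n}\geqslant \delta\,\Log n$. Choosing $\delta$ large enough that $\varepsilon\delta/4 > \alpha+2$ yields $E_{n}\leqslant -(\alpha+2)\Log n$ for all large $n$, which closes the argument. I expect the one real obstacle to be the bookkeeping of this last paragraph: every $\sim$ must be upgraded to an inequality holding simultaneously for all $n$ past a single threshold, and the parameter $\delta$ in (v) must be selected once and for all, independently of $n$, before passing to the limit. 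Verifying that the correction terms in the expansion of $v-(v+1)\log(1+v)$ are genuinely dominated—rather than merely asymptotically smaller—and correctly translating the three liminf-type hypotheses (iii)--(v) into uniform eventual inequalities is where the care lies; all of the probabilistic content is already carried by Lemma~\ref{lem:1}.
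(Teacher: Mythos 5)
Your proposal is correct and follows essentially the same route as the paper: apply the Bennett bound of Lemma~\ref{lem:1} to both $\{X_{n,k}\}$ and $\{-X_{n,k}\}$, then show via (iii), (v) and (vi) that the exponent is eventually $\leqslant -(1+\eta)\Log n$ so that the polynomial growth of $M_{n}$ is beaten; your factorization $E_{n}=(s_{n}/a_{n}^{2})\bigl[v_{n}-(v_{n}+1)\log(1+v_{n})\bigr]$ is just a cleaner bookkeeping of the paper's rewriting of the exponent as a bracket times $\log n$. The only quibble is that condition (iv) plays no real role in your argument (nor, strictly, in the paper's), so attributing the control of the ``subordinate terms'' to it is cosmetic rather than substantive.
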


\begin{proof}
Fix arbitrarily $\varepsilon > 0$. We have
\begin{align*}
\mathbb{P} & \left\{\frac{1}{b_{n}} \sum_{k=1}^{n} X_{n,k} > \varepsilon \right\} \leqslant \\
& \leqslant M_{n} \exp \left[\frac{\varepsilon b_{n}}{a_{n}} - \left(\frac{\varepsilon b_{n}}{a_{n}} + \frac{s_{n}}{a_{n}^{2}} \right) \log \left(1 + \frac{\varepsilon a_{n} b_{n}}{s_{n}} \right) \right] \\
& = \exp \left\{\left[\frac{\log M_{n}}{\log n} - \left(\frac{\varepsilon b_{n}}{a_{n} \log n} + \frac{s_{n}}{a_{n}^{2} \log n} \right) \left(\log \left(1 + \frac{\varepsilon a_{n} b_{n}}{s_{n}} \right) - 1 \right) - \frac{s_{n}}{a_{n}^{2} \log n} \right] \log n \right\}
\end{align*}
according to Lemma~\ref{lem:1}. From conditions (iii) and (iv) we obtain
\begin{equation*}
\left(\frac{\varepsilon b_{n}}{a_{n} \log n} + \frac{s_{n}}{a_{n}^{2} \log n} \right) \left[\log \left(1 + \frac{\varepsilon a_{n} b_{n}}{s_{n}} \right) - 1 \right] \geqslant \frac{b_{n} \, \Log \left(a_{n} b_{n}/s_{n} \right)}{C(\varepsilon) a_{n} \, \Log \, n}
\end{equation*}
for all sufficiently large $n$ and some $C(\varepsilon)  > 0$. Thus, conditions (iv), (v) and (vi) yield
\begin{align*}
\limsup_{n \rightarrow \infty} & \left\{\frac{\log M_{n}}{\log n} - \left(\frac{\varepsilon b_{n}}{a_{n} \log n} + \frac{s_{n}}{a_{n}^{2} \log n} \right) \left[\log \left(1 + \frac{\varepsilon a_{n} b_{n}}{s_{n}} \right) - 1 \right] - \frac{s_{n}}{a_{n}^{2} \log n} \right\} \leqslant \\
&\qquad \leqslant \alpha - \liminf_{n \rightarrow \infty} \frac{b_{n} \, \Log \left(a_{n} b_{n}/s_{n} \right)}{C(\varepsilon) a_{n} \, \Log \, n} < - 1,
\end{align*}
for some $\alpha > 0$ (fixed) since ${\displaystyle \liminf_{n \rightarrow \infty} \frac{b_{n} \, \Log \left(a_{n} b_{n}/s_{n} \right)}{(1 + \alpha)C(\varepsilon) a_{n} \, \Log \, n} > 1}$. Thereby,
\begin{equation}\label{eq:3.5}
\sum_{n=1}^{\infty} \mathbb{P} \left\{\frac{1}{b_{n}} \sum_{k=1}^{n} X_{n,k} > \varepsilon \right\} < \infty \quad \text{for all} \; \; \varepsilon > 0.
\end{equation}
According to Lemma 1 of \cite{Lita16b}, $\{-X_{n,k}, \, 1 \leqslant k \leqslant n, \, n \geqslant 1\}$ is still row-wise END with dominating sequence $\{M_{n}, \, n \geqslant 1 \}$. Hence, performing similar computations for the triangular array $\{-X_{n,k}, \, 1 \leqslant k \leqslant n, \, n \geqslant 1\}$, we get
\begin{equation}\label{eq:3.6}
\sum_{n=1}^{\infty} \mathbb{P} \left\{- \frac{1}{b_{n}} \sum_{k=1}^{n} X_{n,k} > \varepsilon \right\} < \infty \quad \text{for all} \; \; \varepsilon > 0.
\end{equation}
The result follows by \eqref{eq:3.5} and \eqref{eq:3.6}.
\end{proof}

\begin{proofTheorem1}
Setting
\begin{gather*}
X_{n,k}' = X_{n,k} I_{\left\{\abs{X_{n,k}} \leqslant a_{n} \right\}} + a_{n} I_{\left\{X_{n,k} > a_{n} \right\}} - a_{n} I_{\left\{X_{n,k} < - a_{n} \right\}}, \\
X_{n,k}'' = X_{n,k} I_{\left\{\abs{X_{n,k}} > a_{n} \right\}} + a_{n} I_{\left\{X_{n,k} < - a_{n} \right\}} - a_{n} I_{\left\{X_{n,k} > a_{n} \right\}}
\end{gather*}
we have $X_{n,k}' + X_{n,k}'' = X_{n,k}$. From Lemma 1 of \cite{Lita16b}, the triangular array $\{X_{n,k}', 1 \leqslant k \leqslant n, n \geqslant 1 \}$ is row-wise END with dominating sequence $\{M_{n}, \, n \geqslant 1 \}$ since the function $T_{\ell}(t) = \max(\min(t,\ell),-\ell)$, which describes the truncation at level $\ell$, is nondecreasing. Further, $\{X_{n,k}' - \mathbb{E} \, X_{n,k}', 1 \leqslant k \leqslant n, n \geqslant 1 \}$ is also row-wise END with dominating sequence $\{M_{n}, \, n \geqslant 1 \}$ and
\begin{equation*}
\big\vert X_{n,k}' - \mathbb{E} \, X_{n,k}' \big\vert \leqslant 2 a_{n}.
\end{equation*}
Since $\sum_{k=1}^{n} \mathbb{E} \big(X_{n,k}' - \mathbb{E} \, X_{n,k}' \big)^{2} \leqslant 2 s_{n}$, Lemma~\ref{lem:3} guarantees
\begin{equation}\label{eq:3.7}
\sum_{n=1}^{\infty} \mathbb{P} \left\{\abs{\frac{1}{b_{n}} \sum_{k=1}^{n} \left(X_{n,k}' - \mathbb{E} \, X_{n,k}' \right)} > \varepsilon \right\} < \infty \quad \text{for all} \; \; \varepsilon > 0.
\end{equation}
Now, we shall demonstrate that
\begin{equation}\label{eq:3.8}
\sum_{n=1}^{\infty} \mathbb{P} \left\{\abs{\frac{1}{b_{n}} \sum_{k=1}^{n} \left(X_{n,k}'' - \mathbb{E} \, X_{n,k}'' \right)} > \varepsilon \right\} < \infty \quad \text{for all} \; \; \varepsilon > 0.
\end{equation}
We have $\vert X_{n,k}'' \vert \leqslant \vert X_{n,k} \vert I_{\left\{\abs{X_{n,k}} > a_{n} \right\}}$ and
\begin{align*}
\frac{1}{n} \sum_{k=1}^{n} \mathbb{E} \vert X_{n,k} \vert I_{\left\{\abs{X_{n,k}} > a_{n} \right\}} \leqslant C \, \mathbb{E} \abs{X} I_{\left\{\abs{X} > a_{n} \right\}}
\end{align*}
since $\{X_{n,k}, 1 \leqslant k \leqslant n, n \geqslant 1 \}$ is weakly mean dominated by $X$. Thus
\begin{align*}
\mathbb{P} \left\{\abs{\frac{1}{b_{n}} \sum_{k=1}^{n} \left(X_{n,k}'' - \mathbb{E} \, X_{n,k}'' \right)} > \varepsilon \right\} &\leqslant \mathbb{P} \left\{\frac{1}{b_{n}} \sum_{k=1}^{n} \vert X_{n,k}'' - \mathbb{E} \, X_{n,k}'' \vert > \varepsilon \right\} \\
&\leqslant \frac{2}{\varepsilon b_{n}} \sum_{k=1}^{n} \mathbb{E} \vert X_{n,k} \vert I_{\left\{\abs{X_{n,k}} > a_{n} \right\}} \\
&\leqslant \frac{n \, C(\varepsilon)}{b_{n}} \mathbb{E} \abs{X} I_{\left\{\abs{X} > a_{n} \right\}}
\end{align*}
for some constant $C(\varepsilon)>0$ (non-depending on $n$) and it suffices to prove
\begin{equation}\label{eq:3.9}
\sum_{n=1}^{\infty} \frac{n}{b_{n}} \mathbb{E} \abs{X} I_{\left\{\abs{X} > a_{n} \right\}} < \infty.
\end{equation}
Integrating by parts we get
\begin{equation*}
\mathbb{E} \abs{X} I_{\left\{\abs{X} > a_{n} \right\}} = a_{n} \mathbb{P} \left\{\abs{X} > a_{n} \right\} + \int_{a_{n}}^{\infty} \mathbb{P} \left\{\abs{X} > t \right\} \mathrm{d}t
\end{equation*}
so that \eqref{eq:3.9} turns into
\begin{equation}\label{eq:3.10}
\sum_{n=1}^{\infty} \left(\frac{n a_{n}}{b_{n}} \mathbb{P} \left\{\abs{X} > a_{n} \right\} + \frac{n}{b_{n}} \int_{a_{n}}^{\infty} \mathbb{P} \left\{\abs{X} > t \right\} \mathrm{d}t \right)
\end{equation}
Recalling that
\begin{align*}
\sum_{n=1}^{\infty} \frac{n a_{n}}{b_{n}} \mathbb{P} \left\{\abs{X} > a_{n} \right\} &\leqslant C \sum_{n=1}^{\infty} \log \left(\frac{a_{n} b_{n}}{s_{n}} \right) \frac{n}{\Log \left(n^{\delta} \right)} \mathbb{P} \left\{\abs{X} > a_{n} \right\} \\
& \leqslant C \sum_{n=1}^{\infty} \log \left(\frac{a_{n} b_{n}}{s_{n}} \right) \mathbb{P} \left\{\abs{X} > a_{n} \right\} \int_{0}^{n} \frac{1}{\Log \left(u^{\delta} \right)} \, \mathrm{d}u \\
&= C \int_{0}^{\infty} \frac{1}{\Log \left(u^{\delta} \right)} \sum_{\left\{n \colon n > u \right\}} \Log \left(\frac{a_{n} b_{n}}{s_{n}} \right) \mathbb{P} \left\{\abs{X} > a_{n} \right\} \mathrm{d}u \\
& \leqslant C \int_{0}^{\infty} \frac{1}{\Log \left(u^{\delta} \right)} \int_{\lfloor u \rfloor}^{\infty} \Log \left[\frac{a(t) b(t)}{s(t)} \right] \mathbb{P} \left\{\abs{X} > a(t) \right\} \mathrm{d}t \mathrm{d}u \\
&\leqslant C(\delta) \int_{0}^{\infty} \frac{1}{\Log \, u} \int_{\lfloor u \rfloor}^{\infty} \Log \left[\frac{a(t) b(t)}{s(t)} \right] \mathbb{P} \left\{a^{-1} \left(\abs{X} \right) > t \right\} \mathrm{d}t \mathrm{d}u
\end{align*}
for some $C(\delta) > 0$, and
\begin{align*}
\sum_{n=1}^{\infty} \frac{n}{b_{n}} \int_{a_{n}}^{\infty} \mathbb{P} \left\{\abs{X} > t \right\} \mathrm{d}t &=\int_{0}^{\infty} \mathbb{P} \left\{\abs{X} > t \right\} \sum_{\left\{n\colon a_{n} \leqslant t \right\}} \frac{n}{b_{n}} \mathrm{d}t \\
&\leqslant \int_{0}^{\infty} \mathbb{P} \left\{\abs{X} > t \right\} \int_{0}^{\lfloor a^{-1}(t) \rfloor} \frac{u+1}{b(u)} \mathrm{d}u \, \mathrm{d}t \\
&\leqslant C \int_{0}^{\infty} \mathbb{P} \left\{\abs{X} > t \right\} \int_{0}^{\lfloor a^{-1}(t) \rfloor} \frac{u}{b(u)} \mathrm{d}u \, \mathrm{d}t
\end{align*}
we conclude the convergence of the series \eqref{eq:3.10}, which ensures \eqref{eq:3.8}. Since
\begin{align*}
\mathbb{P} & \left\{\abs{\frac{1}{b_{n}} \sum_{k=1}^{n} \left(X_{n,k} - \mathbb{E} \, X_{n,k} \right)} > \varepsilon \right\} \leqslant \\
& \qquad \leqslant \mathbb{P} \left\{\abs{\frac{1}{b_{n}} \sum_{k=1}^{n} \left(X_{n,k}' - \mathbb{E} \, X_{n,k}' \right)} + \abs{\frac{1}{b_{n}} \sum_{k=1}^{n} \left(X_{n,k}'' - \mathbb{E} \, X_{n,k}'' \right)} > \varepsilon \right\} \\
& \qquad \leqslant \mathbb{P} \left\{\abs{\frac{1}{b_{n}} \sum_{k=1}^{n} (X_{n,k}' - \mathbb{E} \, X_{n,k}')} > \frac{\varepsilon}{2} \right\} + \mathbb{P} \left\{\abs{\frac{1}{b_{n}} \sum_{k=1}^{n} (X_{n,k}'' - \mathbb{E} \, X_{n,k}'')} > \frac{\varepsilon}{2} \right\},
\end{align*}
\eqref{eq:3.7} and \eqref{eq:3.8} yields the thesis.
\end{proofTheorem1}

\begin{proofCorollary1}
Since for all $\varepsilon > 0$,
\begin{align*}
& \sum_{n=1}^{\infty} \mathbb{P} \left\{\abs{\frac{1}{b_{n}} \sum_{k=1}^{n} c_{n,k} \left(X_{n,k} - \mathbb{E} \, X_{n,k} \right)} > \varepsilon \right\} = \\
& \; \; = \sum_{n=1}^{\infty} \mathbb{P} \left\{\abs{\frac{1}{b_{n}} \sum_{k=1}^{n} c_{n,k}^{+} \left(X_{n,k} - \mathbb{E} \, X_{n,k} \right) - \frac{1}{b_{n}} \sum_{k=1}^{n} c_{n,k}^{-} \left(X_{n,k} - \mathbb{E} \, X_{n,k} \right)} > \varepsilon \right\} \\
& \; \; \leqslant \sum_{n=1}^{\infty} \mathbb{P} \left\{\abs{\frac{1}{b_{n}} \sum_{k=1}^{n} c_{n,k}^{+} (X_{n,k} - \mathbb{E} \, X_{n,k})} > \frac{\varepsilon}{2} \right\} + \sum_{n=1}^{\infty} \mathbb{P} \left\{\abs{\frac{1}{b_{n}} \sum_{k=1}^{n} c_{n,k}^{-} (X_{n,k} - \mathbb{E} \, X_{n,k})} > \frac{\varepsilon}{2} \right\}
\end{align*}
where $c_{n,k}^{+} = \max \{c_{n,k}, 0 \} \geqslant 0$ and $c_{n,k}^{-} = \max \{- c_{n,k}, 0 \} \geqslant 0$, we shall assume that the triangular array $\{c_{n,k}, \, 1 \leqslant k \leqslant n, \, n \geqslant 1 \}$ is nonnegative. Thereby, from Lemma 1 of \cite{Lita16b}, $\{c_{n,k} X_{n,k}, \, 1 \leqslant k \leqslant n, \, n \geqslant 1 \}$ is row-wise END with dominating sequence $\{M_{n}, \, n \geqslant 1 \}$.

For $p \geqslant 1$, we have
\begin{equation*}
\sum_{k=1}^{n} \left[\mathbb{E} \left(c_{n,k}^{2} X_{n,k}^{2} I_{\left\{\lvert c_{n,k} X_{n,k} \rvert \leqslant a_{n} \right\}} \right) + a_{n}^{2} \mathbb{P} \left\{\lvert c_{n,k} X_{n,k} \rvert > a_{n} \right\} \right] \leqslant C n \mathbb{E} \, X^{2}
\end{equation*}
from Lemma 2.1 of~\cite{Gut92} where the constant $C$ involves ${\displaystyle \sup_{n \geqslant 1} \, \max_{1 \leqslant k \leqslant n} \abs{c_{n,k}} < \infty}$. Fixing $\delta > 0$ (arbitrarily) and setting $a_{n} = (2 - p) n^{1/p}/(2 \delta p)$, $b_{n} = n^{1/p}$, $s_{n} = C n \mathbb{E} \, X^{2}$, it follows
\begin{gather*}
\frac{s_{n}}{a_{n} b_{n}} = \frac{(2 \delta p) \, C \, \mathbb{E} \, X^{2}}{(2 - p) \, n^{2/p - 1}} \longrightarrow 0 \; \; \textnormal{as} \; \; n \rightarrow \infty, \\
\frac{s_{n}}{a_{n}^{2} \log n} = \frac{(2 \delta p)^{2} \, C \, \mathbb{E} \, X^{2}}{(2 - p)^{2} \, n^{2/p - 1} \log n} \longrightarrow 0 \; \; \textnormal{as} \; \; n \rightarrow \infty, \\
\frac{b_{n} \, \Log \left(a_{n} b_{n}/s_{n} \right)}{a_{n} \, \Log \left(n^{\delta} \right)} = \frac{(2 \delta p) n^{1/p} \log \left[(2 - p) n^{2/p - 1}/(2 \delta p \, C \, \mathbb{E} X^{2}) \right]}{(2 - p) n^{1/p} \, \Log \left(n^{\delta} \right)} \sim 2 \; \; \textnormal{as} \; \; n \rightarrow \infty,
\end{gather*}
and conditions (b), (c) and (d) of Theorem~\ref{thr:1} hold. Since $(2 \delta p t)^{p}/(2 - p)^{p}$ is an asymptotic inverse of $a(t) = (2 - p) t^{1/p}/(2 \delta p)$ (see \cite{Bingham87}, page $28$) we have
\begin{gather}
\int_{0}^{\infty} \frac{1}{\Log \, u} \int_{\lfloor u \rfloor}^{\infty} \Log \left[\frac{a(t) b(t)}{s(t)} \right] \mathbb{P} \left\{a^{-1} \left(\abs{X} \right) > t \right\} \mathrm{d}t \mathrm{d}u \leqslant C(\delta) \int_{0}^{\infty} \mathbb{P} \left\{\abs{X} > y \right\} y^{2p - 1} \mathrm{d}y, \label{eq:3.11} \\
\int_{0}^{\infty} \mathbb{P} \left\{\abs{X} > t \right\} \int_{0}^{\lfloor a^{-1}(t) \rfloor} u^{1 - 1/p} \mathrm{d}u \mathrm{d}t \leqslant C(\delta) \int_{0}^{\infty} \mathbb{P} \left\{\abs{X} > t \right\} t^{2p - 1} \mathrm{d}t \label{eq:3.12}
\end{gather}
According to Lemma 2.4 of \cite{Petrov95} (see page 61), assumptions (e) and (f) are fulfilled which establishes the thesis for $p \geqslant 1$. For $1/2 < p < 1$, we have
\begin{align*}
\sum_{k=1}^{n} & \left[\mathbb{E} \left(c_{n,k}^{2} X_{n,k}^{2} I_{\left\{\lvert c_{n,k} X_{n,k} \rvert \leqslant a_{n} \right\}} \right) + a_{n}^{2} \mathbb{P} \left\{\lvert c_{n,k} X_{n,k} \rvert > a_{n} \right\} \right] \leqslant \\
& \leqslant a_{n}^{2 - 2p} \sum_{k=1}^{n} \mathbb{E} \, \lvert c_{n,k} X_{n,k} \rvert^{2p} I_{\left\{\lvert c_{n,k} X_{n,k} \rvert \leqslant a_{n} \right\}} + a_{n}^{2-2p} \sum_{k=1}^{n} \mathbb{E} \, \lvert c_{n,k} X_{n,k} \rvert^{2p} \\
&\leqslant C n a_{n}^{2 - 2p} \, \mathbb{E} \, \abs{X}^{2p}
\end{align*}
with $C$ depending on ${\displaystyle \sup_{n \geqslant 1} \, \max_{1 \leqslant k \leqslant n} \abs{c_{n,k}} < \infty}$. Taking $s_{n} = C n a_{n}^{2 - 2p} \, \mathbb{E} \, \abs{X}^{2p}$, $a_{n} = n^{1/p}/(2 \delta)$ and $b_{n} = n^{1/p}$ we still obtain
\begin{gather*}
\frac{s_{n}}{a_{n} b_{n}} = \frac{C \, (2 \delta)^{2p - 1} \, \mathbb{E} \abs{X}^{2p}}{n} \longrightarrow 0 \; \; \textnormal{as} \; \; n \rightarrow \infty, \\
\frac{s_{n}}{a_{n}^{2} \log n} = \frac{C \, (2 \delta)^{2p} \, \mathbb{E} \abs{X}^{2p}}{n \, \log n} \longrightarrow 0 \; \; \textnormal{as} \; \; n \rightarrow \infty, \\
\frac{b_{n} \, \Log \left(a_{n} b_{n}/s_{n} \right)}{a_{n} \, \Log \left(n^{\delta} \right)} = \frac{(2 \delta) n^{1/p} \log \left[n/\left((2 \delta)^{2p-1} \, C \, \mathbb{E} \abs{X}^{2p} \right) \right]}{n^{1/p} \, \Log \left(n^{\delta} \right)} \sim 2 \; \; \textnormal{as} \; \; n \rightarrow \infty.
\end{gather*}
Again, conditions (b), (c) and (d) of Theorem~\ref{thr:1} are satisfied, as well as \eqref{eq:3.11} and \eqref{eq:3.12}, yielding the conclusion for $1/2 < p < 1$. Finally, supposing $0 < p \leqslant 1/2$, Lemma~\ref{lem:2} and Corollary 2.2 of \cite{Gut92} guarantee
\begin{align*}
& \sum_{n=1}^{\infty} \mathbb{P} \left\{\abs{\sum_{k=1}^{n} c_{n,k} X_{n,k}} > \varepsilon n^{1/p} \right\} \leqslant \\
&\quad \leqslant \sum_{n=1}^{\infty} \sum_{k=1}^{n} \mathbb{P} \left\{\abs{c_{n,k} X_{n,k}} > \frac{\varepsilon n^{1/p}}{\lambda} \right\} + 2 e^{\lambda} \sum_{n=1}^{\infty} M_{n} \left(1 + \frac{\varepsilon^{2p} n^{2}}{\lambda^{2p-1} \sum_{k=1}^{n} \mathbb{E} \abs{c_{n,k} X_{n,k}}^{2p}} \right)^{-\lambda} \\
&\quad \leqslant \sum_{n=1}^{\infty} n \mathbb{P} \left\{\abs{X} > \frac{(\varepsilon/L) n^{1/p}}{\lambda} \right\} + 2 e^{\lambda} \sum_{n=1}^{\infty} M_{n} \left(1 + \frac{(\varepsilon/L)^{2p} n}{\lambda^{2p-1} C} \right)^{-\lambda} < \infty
\end{align*}
provided that, for some $\alpha > 0$, $M_{n} = O \left(n^{\alpha} \right)$, $n \rightarrow \infty$, $\lambda > \alpha + 1$ and a (fixed) constant $L$ such that ${\displaystyle L \geqslant \sup_{n \geqslant 1} \, \max_{1 \leqslant k \leqslant n} \abs{c_{n,k}}}$. The proof is complete.
\end{proofCorollary1}

\begin{proofTheorem2}
Without loss of generality and similarly to the proof of Corollary~\ref{cor:1}, we shall admit that the triangular array $\{c_{n,k}, \, 1 \leqslant k \leqslant n, \, n \geqslant 1 \}$ is nonnegative; otherwise, one can always perform
\begin{equation*}
\sum_{k=1}^{n} c_{n,k} X_{k} = \sum_{k=1}^{n} c_{n,k}^{+} X_{k} - \sum_{k=1}^{n} c_{n,k}^{-} X_{k},
\end{equation*}
where $c_{n,k}^{+} = \max \{c_{n,k}, 0 \} \geqslant 0$ and $c_{n,k}^{-} = \max \{- c_{n,k}, 0 \} \geqslant 0$. Consider $a_{n} = n^{1/p}/\Log^{1/p} \, n$, $b_{n} = n^{1/p} \, \Log^{1 - 1/p} \, n$,
\begin{equation*}
X_{n}' = X_{n} I_{\left\{\abs{X_{n}} \leqslant a_{n} \right\}} - \mathbb{E} \, X_{n} I_{\left\{\abs{X_{n}} \leqslant a_{n} \right\}} + a_{n} I_{\left\{X_{n} > a_{n} \right\}} - a_{n} I_{\left\{X_{n} < -a_{n} \right\}}
\end{equation*}
and
\begin{equation*}
X_{n}'' = X_{n} I_{\left\{\abs{X_{n}} > a_{n} \right\}} - \mathbb{E} \, X_{n} I_{\left\{\abs{X_{n}} > a_{n} \right\}} + a_{n} I_{\left\{X_{n} < -a_{n} \right\}} - a_{n} I_{\left\{X_{n} > a_{n} \right\}}.
\end{equation*}
Therefore, $X_{n}' + X_{n}'' = X_{n} - \mathbb{E} \, X_{n}$. The random variables
\begin{equation*}
X_{n} I_{\left\{\abs{X_{n}} \leqslant a_{n} \right\}} + a_{n} I_{\left\{X_{n} > a_{n} \right\}} - a_{n} I_{\left\{X_{n} < -a_{n} \right\}}
\end{equation*}
are widely orthant dependent with dominating sequence $\{M_{n}, \, n \geqslant 1 \}$ by Lemma 2.1 of \cite{Shen16} since the function $T_{\ell}(t) = \max(\min(t,\ell),-\ell)$ is nondecreasing. Hence, the sequences $\{X_{n}', \, n \geqslant 1\}$ and $\{c_{n,k} X_{k}', \, 1 \leqslant k \leqslant n \}$, for every $n \geqslant 1$, are also widely orthant dependent with dominating sequence $\{M_{n}, \, n \geqslant 1 \}$, as they are nondecreasing transformations of widely orthant dependent random variables with the referred dominating sequence. This means that $\{c_{n,k} X_{k}', \, 1 \leqslant k \leqslant n, \, n \geqslant 1 \}$ is row-wise END with dominating sequence $\{M_{n}, \, n \geqslant 1 \}$.
Putting $X_{n,k} := c_{n,k} X_{k}'$ we have
\begin{equation*}
\sum_{k=1}^{n} \mathbb{E} \, X_{n,k}^{2} \leqslant 4 \sup_{m \geqslant 1} \, \max_{1 \leqslant j \leqslant m} c_{m,j}^{2} \sum_{k=1}^{n} \mathbb{E} \left[X_{k}^{2} I_{\left\{\abs{X_{k}} \leqslant a_{k} \right\}} + a_{k}^{2} I_{\left\{\abs{X_{k}} > a_{k} \right\}} \right] =: s_{n}
\end{equation*}
with ${\displaystyle \sup_{m \geqslant 1} \, \max_{1 \leqslant j \leqslant m} c_{m,j}^{2} < \infty}$ according to \eqref{eq:2.3}. Therefore,
\begin{equation*}
\frac{s_{n}}{a_{n} b_{n}} = \frac{C}{n^{2/p} \, \Log^{1 - 2/p} \, n} \sum_{k=1}^{n} \mathbb{E} \left[X_{k}^{2} I_{\left\{\abs{X_{k}} \leqslant a_{k} \right\}} + a_{k}^{2} I_{\left\{\abs{X_{k}} > a_{k} \right\}} \right] \longrightarrow 0
\end{equation*}
and
\begin{equation*}
\frac{s_{n}}{a_{n}^{2} \, \Log \, n} = \frac{C}{n^{2/p} \, \Log^{1 - 2/p} \, n} \sum_{k=1}^{n} \mathbb{E} \left[X_{k}^{2} I_{\left\{\abs{X_{k}} \leqslant a_{k} \right\}} + a_{k}^{2} I_{\left\{\abs{X_{k}} > a_{k} \right\}} \right] \longrightarrow 0
\end{equation*}
as $n \rightarrow \infty$. Moreover, for each $\delta > 0$,
\begin{equation*}
\frac{b_{n} \, \Log \left(a_{n} b_{n}/s_{n} \right)}{a_{n} \, \Log \left(n^{\delta} \right)} = - \frac{1}{\delta} \, \Log \left(\frac{C}{n^{2/p} \, \Log^{1 - 2/p} \, n} \sum_{k=1}^{n} \mathbb{E} \left[X_{k}^{2} I_{\left\{\abs{X_{k}} \leqslant a_{k} \right\}} + a_{k}^{2} I_{\left\{\abs{X_{k}} > a_{k} \right\}} \right] \right) \longrightarrow \infty
\end{equation*}
as $n \rightarrow \infty$ via Lemma 4 of \cite{Lita15} (see page $62$) and Kronecker's lemma. Thus, from Lemma~\ref{lem:3} we get
\begin{equation}\label{eq:3.13}
\frac{1}{n^{1/p} \, \Log^{1 - 1/p} \, n} \sum_{k=1}^{n} c_{n,k} X_{k}' \overset{\textnormal{a.s.}}{\longrightarrow} 0.
\end{equation}
It suffices to prove
\begin{equation*}
\frac{1}{n^{1/p} \, \Log^{1 - 1/p} \, n} \sum_{k=1}^{n} c_{n,k} X_{k}'' \overset{\textnormal{a.s.}}{\longrightarrow} 0.
\end{equation*}
We have
\begin{align*}
\max_{2^{m} \leqslant n < 2^{m+1}} \abs{\sum_{k=1}^{n} \frac{c_{n,k}}{n^{1/p} \, \Log^{1 - 1/p} n} X_{k}''} &\leqslant C \max_{2^{m} \leqslant n < 2^{m+1}} \frac{1}{n^{1/p} \Log^{1-1/p} n} \sum_{k=1}^{n} \abs{X_{k}''} \\
& \leqslant \frac{C}{\left(2^{m+1} \right)^{1/p} \left(\Log \, 2^{m+1} \right)^{1-1/p}} \sum_{k=1}^{2^{m+1}} \abs{X_{k}''}
\end{align*}
and for any $\varepsilon>0$ we obtain from Lemma 4 of \cite{Lita15},
\begin{align*}
\sum_{m=1}^{\infty} \mathbb{P} & \left\{\frac{1}{\left(2^{m} \right)^{1/p} \left(\Log \, 2^{m} \right)^{1-1/p}} \sum_{k=1}^{2^{m}} \abs{X_{k}''} > \varepsilon \right \} \leqslant \\
& \leqslant \frac{1}{\varepsilon} \sum_{m=1}^{\infty} \frac{1}{\left(2^{m} \right)^{1/p} \left(\Log \, 2^{m} \right)^{1-1/p}} \sum_{k=1}^{2^{m}} \mathbb{E} \abs{X_{k}''} \\
& = \frac{2}{\varepsilon} \sum_{k=1}^{\infty} \mathbb{E} \left[\abs{X_{k}} I_{\left\{\abs{X_{k}} > a_{k} \right\}} + a_{k} I_{\left\{\abs{X_{k}} > a_{k} \right\}} \right] \sum_{\left\{m\colon 2^{m} \geqslant k \right\}} \frac{1}{\left(2^{m} \right)^{1/p} \left(\Log \, 2^{m} \right)^{1-1/p}} \\
& \leqslant \frac{2}{\varepsilon} \sum_{k=1}^{\infty} \mathbb{E} \left[\abs{X_{k}} I_{\left\{\abs{X_{k}} > a_{k} \right\}} + a_{k} I_{\left\{\abs{X_{k}} > a_{k} \right\}} \right] \frac{1}{\Log^{1-1/p} \, k} \sum_{\left\{m\colon 2^{m} \geqslant k \right\}} \frac{1}{\left(2^{m} \right)^{1/p}} \\
& \leqslant C(\varepsilon) \sum_{k=1}^{\infty} \frac{1}{k^{1/p} \, \Log^{1-1/p} k} \mathbb{E} \left[\abs{X_{k}} I_{\left\{\abs{X_{k}} > a_{k} \right\}} + a_{k} I_{\left\{\abs{X_{k}} > a_{k} \right\}} \right] < \infty
\end{align*}
where the summation $\underset{\left\{m\colon 2^{m} \geqslant k \right\}}{\sum}$ is taken over all $m$ such that $2^{m} \geqslant k$. Thus, Borel-Cantelli lemma permits us to conclude
\begin{equation*}
\max_{2^{m-1} \leqslant n < 2^{m}} \abs{\sum_{k=1}^{n} \frac{c_{n,k}}{n^{1/p} \, \Log^{1 - 1/p} n}  X_{k}''} \overset{\textnormal{a.s.}}{\longrightarrow} 0
\end{equation*}
as $m \rightarrow \infty$ , so that
\begin{equation*}
\frac{1}{n^{1/p} \, \Log^{1 - 1/p} n} \sum_{k=1}^{n} c_{n,k} X_{k}'' \overset{\textnormal{a.s.}}{\longrightarrow} 0
\end{equation*}
and the thesis is established.
\end{proofTheorem2}

Looking in detail to the proof of Theorem~\ref{thr:2}, we can infere that Lemma~\ref{lem:3} is sharper than Lemma 3 of \cite{Lita16a} in some scenarios. In fact, we saw that the triangular array $\{c_{n,k} X_{k}', \, 1 \leqslant k \leqslant n, \, n \geqslant 1 \}$ of zero-mean row-wise END random variables with dominating sequence $M_{n} = M$ (for instance) and $a_{n} = n^{1/p}/\Log^{1/p} \, n$, $b_{n} = n^{1/p} \, \Log^{1 - 1/p} \, n$, $s_{n} := C \sum_{k=1}^{n} \mathbb{E} \big[X_{k}^{2} I_{\left\{\abs{X_{k}} \leqslant a_{k} \right\}} + a_{k}^{2} I_{\left\{\abs{X_{k}} > a_{k} \right\}} \big]$ verify all assumptions of Lemma~\ref{lem:3} leading to \eqref{eq:3.13}. However, condition (iii) of Lemma 3 in \cite{Lita16a} is not satisfied, i.e. ${\displaystyle \limsup_{n \rightarrow \infty}} \, a_{n} \sqrt{\Log \, n/s_{n}} = \infty$. Even using Bernstein's inequality (Lemma 2 of \cite{Lita16a}) instead of Bennet's inequality in the conception of prior Lemma 3, we would have
\begin{equation*}
\mathbb{P} \left\{\frac{1}{b_{n}} \sum_{k=1}^{n} X_{n,k} > \varepsilon \right\} \leqslant M_{n} \exp \left\{-\frac{\varepsilon^{2} b_{n}^{2}}{2(\varepsilon a_{n} b_{n} + s_{n})} \right\}.
\end{equation*}
For the sequences $a_{n}$, $b_{n}$ and $s_{n}$ earlier chosen, it would follow $\varepsilon^{2} b_{n}^{2}/(\varepsilon a_{n} b_{n} + s_{n}) \sim \varepsilon \, \Log \, n$ as $n \rightarrow \infty$ and convergence \eqref{eq:3.13} would not be guaranteed.

Naturally, sharper rates (i.e. norming constants) on strong laws of large numbers can be achieved as long as sharper exponencial probability inequalities can be founded.

\section*{Acknowledgements}

This work is a contribution to the Project UID/GEO/04035/2013, funded by FCT - Funda\c{c}\~{a}o para a Ci\^{e}ncia e a Tecnologia, Portugal.



\end{document}